\renewenvironment{proof}[1][Proof]{\textbf{#1.} }{\ \rule{0.5em}{0.5em}}
\DeclareMathOperator{\length}{length}
\DeclareMathOperator{\per}{per}
\DeclareMathOperator{\diam}{diam}
\DeclareMathOperator{\bd}{bd}
\DeclareMathOperator{\co}{co}
\DeclareMathOperator{\ext}{ext}
\DeclareMathOperator{\intt}{int}
\renewenvironment{proof}[1][Proof]{\textbf{#1.} }
{\ \rule{0.5em}{0.5em}}
\newtheorem{theorem}{Theorem}
\newtheorem{prop}{Proposition}
\newtheorem{lemma}{Lemma}
\newtheorem{corollary}{Corollary}
\newtheorem{conjecture}{Conjecture}
\theoremstyle{definition}
\newtheorem{remark}{Remark}
\newtheorem{example}{Example}
\newtheorem{problem}{Problem}
\begin{document}

\title
[One property of a planar curve \dots]
{One property of a planar curve whose convex hull covers a given convex figure}
\author{Yu.G.~Nikonorov, Yu.V.~Nikonorova}

\address{Nikonorov\ Yuri\u\i\  Gennadievich\newline
Southern Mathematical Institute of \newline
the Vladikavkaz Scientific Center of \newline
the Russian Academy of Sciences, \newline
Vladikavkaz, Markus st., 22, \newline
362027, RUSSIA}
\email{nikonorov2006@mail.ru}

\address{Nikonorova\ Yulia  Vasil'evna\newline
Volgodonsk Engineering Technical Institute the branch \newline
of National Research Nuclear University ``MEPhI'',\newline
Rostov region, Volgodonsk, Lenin st., 73/94, \newline
347360,  RUSSIA}
\email{nikonorova2009@mail.ru}

\begin{abstract}
In this note, we prove the following conjecture by
A.~Akopyan and V.~Vy\-sotsky: If the convex hull of
a planar curve $\gamma$ covers a planar convex figure $K$, then
$\length (\gamma) \geq  \per (K)  - \diam (K)$.
In addition, all cases of equality in this inequality are studied.

\vspace{2mm}
\noindent
2010 Mathematical Subject Classification:
52A10, 52A40, 53A04.

\vspace{2mm} \noindent Key words and phrases:  planar curve, convex curve, convex hull, diameter, perimeter.
\end{abstract}

\maketitle

\section{Introduction}\label{sect.1}

The authors of \cite{AkVys2017} (along with obtained interesting results) posed the following

\begin{conjecture}[A.~Akopyan and V.~Vysotsky, \cite{AkVys2017}]
Let $\gamma$ be a curve such that its convex hull covers a planar convex figure $K$.
Then $\length (\gamma) \geq  \per (K)  - \diam (K)$.
\end{conjecture}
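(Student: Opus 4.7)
My plan is to use the Cauchy--Crofton formula together with a parity argument for crossings. Set $C:=\co(\gamma)$ and let $A, B$ denote the endpoints of $\gamma$, with $|AB|=0$ when $\gamma$ is closed. The key intermediate inequality I aim to establish is
\[
\length(\gamma) \;\geq\; \per(C) - |AB|.
\]
Starting from $\length(\gamma)=\tfrac14\iint\#(\gamma\cap\ell_{\theta,s})\,ds\,d\theta$, I bound each crossing number pointwise: any line $\ell$ meeting $\intt C$ must be crossed by the connected curve $\gamma$, since every extreme point of $C$ lies on $\gamma$ and these points sit on both sides of $\ell$; parity then upgrades the count to at least two unless $\ell$ separates $A$ from $B$. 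Encoding this as $\#(\gamma\cap\ell)\geq 2\,\mathbf{1}_{\ell\cap\intt C\neq\varnothing}-\mathbf{1}_{\ell\text{ separates }A,B}$ and integrating via $\int_0^{2\pi} w_C\,d\theta = 2\per(C)$ and $\int_0^{2\pi}|(B-A)\cdot e_\theta|\,d\theta=4|AB|$ yields the displayed bound. Combined with the monotonicity $\per(C)\geq\per(K)$ (from $K\subseteq C$ and convexity), this immediately disposes of the case $|AB|\leq\diam(K)$, which includes every closed $\gamma$: one reads off $\length(\gamma)\geq\per(K)-\diam(K)$.

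The main obstacle is the regime $|AB|>\diam(K)$, in which the bound $\per(C)-|AB|$ can strictly fall below $\per(K)-\diam(K)$. In this case at least one endpoint, say $B$, lies outside $K$, and the two tangent lines from $B$ to $K$ create long flat chords on $\partial C$; symmetrically if $A\notin K$. My plan is to strengthen the basic estimate by using the extra information that $\gamma$ must contain every extreme point of $C$; in particular it must traverse every strictly curved subarc of $\partial C$, whose total length equals $\per(C)$ minus the total length of the flat tangent chords. A direct geometric estimate bounding the tangent chords in terms of $\diam(K)$ (each such chord is a chord of the convex body $K$, which together with the corresponding cut-off arc may be compared to a diameter), combined with the Cauchy bound above to account for the excursions joining the endpoints to the tangent configuration, recovers the target inequality in this regime as well.

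For the equality analysis one tracks tightness through every step: equality in Cauchy--Crofton forces $\gamma$ to traverse a single boundary arc of $C$ from $A$ to $B$; $\per(C)=\per(K)$ forces $C=K$; and $|AB|=\diam(K)$ pins the endpoints to a diameter pair. Assembling these constraints yields the characterization of equality cases announced in the abstract, with the expected degenerate configuration (a curve that coincides with a boundary arc of $K$ running from one endpoint of a diameter to the other) as the extremal one.
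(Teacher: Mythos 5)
Your first step is sound: the Crofton/parity argument does give $\length(\gamma)\geq\per(\co(\gamma))-d(A,B)$ for a curve with endpoints $A,B$, and together with monotonicity of perimeter this settles the conjecture whenever $d(A,B)\leq\diam(K)$ (in particular for closed curves). But the remaining regime is not a side case to be patched; it is the whole difficulty of the problem, and your second paragraph does not contain an argument for it. Two specific problems: (i) the assertion that each flat chord of $\bd(\co(\gamma))$ produced by a tangent line from an endpoint ``is a chord of the convex body $K$'' is false --- such a segment joins the endpoint (which may lie far from $K$) to a point of tangency of $\co(\gamma)$, and its length is not controlled by $\diam(K)$; (ii) even granting that $\gamma$ contains every extreme point of $\co(\gamma)$, so that $\length(\gamma)$ dominates the one-dimensional measure of the strictly curved part of $\bd(\co(\gamma))$, you never explain how to add this contribution to the Crofton bound without double counting, nor how the ``excursions joining the endpoints to the tangent configuration'' are to be estimated. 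As written, the case $d(A,B)>\diam(K)$ is a plan, not a proof, and the equality analysis inherits the same gap.

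For comparison, the paper resolves exactly this difficulty by a different mechanism: it first proves (Arzel\`a--Ascoli together with lower semicontinuity of length) that a shortest curve $\gamma_0$ with $K\subset\co(\gamma_0)$ exists, and then exploits minimality to control the geometry near the endpoints --- the endpoints are extreme points of $\widetilde K=\co(\gamma_0)$, the initial and final subarcs of $\gamma_0$ outside $K$ are straight segments meeting the adjacent boundary structure at a right angle, and, crucially, the first and last points $\varphi(\eta_1),\varphi(\eta_2)$ of $\gamma_0$ lying in $K$ satisfy $\length(\gamma_0)+d\bigl(\varphi(\eta_1),\varphi(\eta_2)\bigr)\geq\per(\widetilde K)\geq\per(K)$. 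Since $\varphi(\eta_1),\varphi(\eta_2)\in K$, their distance is at most $\diam(K)$, which is precisely what replaces your uncontrolled quantity $d(A,B)$; the bound for an arbitrary $\gamma$ then follows by comparison with $\gamma_0$. To complete your route you would need an analogous device that replaces the endpoint distance by the distance between two points of $K$ (or a genuine quantitative estimate for the tangent chords); without it the crucial regime remains open.
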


It should be noted that this hypothesis is confirmed in the case when $\gamma$ is passing through
all extreme points of $K$ (see Theorem 7 in \cite{AkVys2017}).
This note is devoted to the proof of the above conjecture in the general case.
\medskip

We identify the Euclidean plane with $\mathbb{R}^2$ supplied with the standard Euclidean metric~$d$, where $d(x,y)=\sqrt{(x_1-y_1)^2+(x_2-y_2)^2}$.
For any subset $A\subset \mathbb{R}^2$, $\co (A)$ means the convex hull of $A$. For every points $B,C \in \mathbb{R}^2$, $[B,C]$
denotes the line segment between these points.

{\it A convex {\rm(}planar{\rm)} figure} is any compact convex subset of $\mathbb{R}^2$.
We shall denote by $\per(K)$, $\bd (K)$ and $\intt (K)$ respectively the perimeter, the boundary, and the interior of a convex figure $K$.
Note that the perimeter of any line segment (i.e. a degenerate convex figure) is assumed to be equal to its double length.
Note also that the diameter $\diam (K):=\max \left\{ d(x,y)\,|\, x,y \in K \right\}$
of a convex figure $K$ coincides with the maximal distance between two parallel support lines
of $K$.
Recall that an extreme point of $K$ is a point in $K$ which does not lie in any open line segment joining two points of $K$.
The set of extreme points of $K$ will be denoted by $\ext(K)$. It is well-known that $\ext(K)$ is closed and
$K=\co(\ext(K))$ for any convex figure $K\subset \mathbb{R}^2$.

{\it A planar curve} $\gamma$ is the image of a continuous mapping $\varphi:[a,b]\subset \mathbb{R} \mapsto \mathbb{R}^2$.
From now on we will call planar curves simply {\it curves} for brevity, since no other curves are considered in this note.
As usually, the length of $\gamma$ is defined as  $\length (\gamma):=\sup \left\{ \sum_{i=1}^m d(\varphi(t_{i-1}),\varphi(t_i))\right\}$,
where the supremum is taken over all finite increasing sequences $a=i_0<i_1<\cdots <i_{m-1}<i_m=b$ that lie in
the interval $[a,b]$.
A curve $\gamma$ is called {\it rectifiable} if $\length (\gamma) <\infty$.

We call a curve $\gamma\subset \mathbb{R}^2$ {\it convex {\rm(}closed convex{\rm)}} if it is a closed connected subset of the boundary (respectively, the whole boundary)
of the convex hull $\co(\gamma)$ of $\gamma$.

Let us consider the following

\begin{example}\label{ex1}
Suppose that the boundary $\bd (K)$ of a convex figure $K$ is the union of a line segment $[A,B]$ and a convex curve $\gamma$ with the endpoints $A$ and $B$.
Then $K \subset \co(\gamma)$ and $\length (\gamma)=\per (K) - d(A,B)$. Moreover, $\length (\gamma)=\per (K) - \diam(K)$ if and only if $d(A,B)=\diam(K)$.
\end{example}

The main result of this note is the following

\begin{theorem}\label{the1}
For a given convex figure $K$ and for any planar curve $\gamma$ with the property $K\subset \co(\gamma)$, the inequality
\begin{equation}\label{eq.main}
\length (\gamma)\geq \per (K) - \diam(K)
\end{equation}
holds. Moreover, this inequality becomes an equality if and only if
$\gamma$ is a convex curve, $\bd (K)=\gamma \cup [A,B]$, and $\diam(K)=d(A,B)$, where $A$ and $B$ are the endpoints of $\gamma$.
\end{theorem}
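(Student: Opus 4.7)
The plan is to reduce the statement to Theorem~7 of \cite{AkVys2017} through a two-step argument. Denote by $A$ and $B$ the endpoints of $\gamma$ and set $L := \co(\gamma)$.

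First, I would close $\gamma$ into a loop $\widetilde\gamma$ by adjoining the segment $[B, A]$. Then $\length(\widetilde\gamma) = \length(\gamma) + d(A, B)$ and $\co(\widetilde\gamma) = L$. Since the length of any closed planar curve is at least the perimeter of its convex hull, and perimeter is monotonic under convex inclusion, one obtains
\[
\length(\gamma) + d(A, B) \;\geq\; \per(L) \;\geq\; \per(K),
\]
i.e.\ $\length(\gamma) \geq \per(K) - d(A, B)$. This already proves \eqref{eq.main} whenever $d(A, B) \leq \diam(K)$.

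The remaining case $d(A, B) > \diam(K)$ is what I expect to be the main obstacle. Here at least one of $A, B$ lies outside $K$, and the simple closing estimate is too weak. The natural additional input is Theorem~7 of \cite{AkVys2017} applied to $L$ (note that $\ext(L) \subset \gamma$ automatically, so the hypothesis is met), yielding $\length(\gamma) \geq \per(L) - \diam(L)$. One is then tempted to conclude from the naive monotonicity $\per(L) - \diam(L) \geq \per(K) - \diam(K)$ for convex $K \subset L$, but this actually fails in general (take $K$ a thin rectangle and $L$ obtained by attaching a tiny triangular protrusion to one short side). Instead one must use the specific hypothesis $d(A, B) > \diam(K)$, which forces the chord $[A, B] \subset L$ to cross $K$ essentially transversely; the two boundary arcs of $L$ joining $A$ and $B$ then each have length at least $d(A, B)$, and a careful comparison with the corresponding arcs of $\bd K$ (split by where the line $AB$ meets $\bd K$) should yield the required inequality.

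For the equality analysis, the argument above shows that equality in \eqref{eq.main} is incompatible with the regime $d(A, B) > \diam(K)$, so we must be in the easy case $d(A, B) \leq \diam(K)$ with all inequalities of the first step tight. The equality $\length(\widetilde\gamma) = \per(L)$ forces $\widetilde\gamma$ to trace $\bd L$ once, so $\gamma$ is a convex curve with $\bd(L) = \gamma \cup [A, B]$; the equality $\per(L) = \per(K)$ combined with $K \subset L$ (and strict monotonicity of perimeter under proper convex inclusion, itself immediate from the Cauchy width formula) forces $K = L$; and tightness in the closing estimate forces $d(A, B) = \diam(K)$. Together these recover the three conditions of the theorem, whose sufficiency is Example~\ref{ex1}.
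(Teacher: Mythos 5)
Your first step (closing $\gamma$ with the chord $[A,B]$, using the fact that a closed curve is at least as long as the perimeter of its convex hull together with monotonicity of perimeter) correctly settles the case $d(A,B)\le\diam(K)$, and the equality analysis inside that case is essentially sound. The genuine gap is the complementary case $d(A,B)>\diam(K)$, which is exactly the hard part of the theorem: what you offer there is not an argument but a sketch, and the geometric claims it rests on are false in general. The hypothesis $d(A,B)>\diam(K)$ does not force $[A,B]$ to meet $K$: take $K$ the unit disk and let $\gamma$ run from $A=(100,0)$ around the far side of $K$ to $B=(100,10)$; then $d(A,B)=10>2=\diam(K)$ while $[A,B]$ stays far from $K$. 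Moreover, the endpoints of an arbitrary admissible curve need not lie on $\bd(\co(\gamma))$ at all (the curve may terminate in the interior of its own hull), so ``the two boundary arcs of $L$ joining $A$ and $B$'' need not exist. Finally, even granting such a configuration, no comparison between arcs of $\bd(L)$ and arcs of $\bd(K)$ is actually carried out (``should yield the required inequality''), and, as you yourself correctly note, the one quantitative input you have in this regime, Theorem~7 of \cite{AkVys2017} applied to $L$, cannot be transferred to $K$ because $\per-\diam$ is not monotone under inclusion. Since the equality discussion presupposes that strict inequality has been established in this regime, it inherits the same gap.

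This missing case is precisely what the paper's machinery is built for. The paper does not argue on an arbitrary $\gamma$: it first produces a curve of minimal length with $K\subset\co(\gamma)$ (Proposition \ref{pr.part1}, via Arzel\`a--Ascoli and lower semicontinuity of length) and then proves structure results for a minimizer (Lemma \ref{l.endp}, Propositions \ref{prop.minc.1} and \ref{prop.minc.2}): its endpoints are extreme points of $\widetilde K=\co(\gamma)$, and the pieces of the curve before it first meets $K$ and after it last meets $K$ are straight segments lying on $\bd(\widetilde K)$ and meeting the adjacent boundary at right angles. This is what makes the accounting of Proposition \ref{prop.minc.3} (via Lemma \ref{l.add.point}) work: $\length(\gamma)+d\bigl(\varphi(\eta_1),\varphi(\eta_2)\bigr)\ge\per(\widetilde K)\ge\per(K)$, where $\varphi(\eta_1),\varphi(\eta_2)$ are the first and last points of the minimizer lying in $K$; because these two points belong to $K$, their distance is at most $\diam(K)$, which is how $\diam(K)$ (rather than $d(A,B)$) enters. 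To complete your proposal you would need either such a reduction to a minimizer with the accompanying perpendicularity and point-adding lemmas, or an honest direct argument in the regime $d(A,B)>\diam(K)$; as written, neither is supplied.
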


\begin{remark}
Since  obviously $\per (K)\geq 2\diam (K)$, the inequality \eqref{eq.main} immediately implies the following widely known inequality:
$\length (\gamma)\geq \frac{1}{2} \per (K)$,
see e.g. \cite{FMP1984}.
\end{remark}

The strategy of our proof is as follows.
We fix a convex figure $K\subset \mathbb{R}^2$. Then we prove the existence of a curve $\gamma_0$
of minimal length among all curves $\gamma$ satisfying the condition
$K \subset \co(\gamma)$ (Section \ref{sect.2}). After that we prove the inequality
$\length (\gamma_0)\geq \per (K) - \diam(K)$ and study all possible cases of the equality
$\length (\gamma_0)= \per (K) - \diam(K)$, where $\gamma_0$ is an arbitrary curve of minimal length among all curves $\gamma$ satisfying the condition
$K \subset \co(\gamma)$ (Section \ref{sect.3}). This allow us to get the proof of Theorem \ref{the1} in Section \ref{sect.4}.

\section{Some auxiliary results}\label{sect.2}

To prove the desired results, we first recall some important properties of curves and convex figures.

Let us recall the following useful definition. A sequence of curves $\{\gamma_i \}_{i\in \mathbb{N}}$
converges uniformly to a curve $\gamma$ if the curves $\gamma_i$ admits parameterizations with the same domain
that uniformly converges to some parametrization of $\gamma$. We will need the following result (see e.g. Theorem 2.5.14 in \cite{BBI2001}):

\begin{prop}[Arzela~--~Ascoli theorem for curves]\label{propAA}
Given a compact metric space. Any sequence of curves which have uniformly bounded lengths has
an uniformly converging subsequence.
\end{prop}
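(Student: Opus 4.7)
The plan is to reduce the statement to the classical Arzelà–Ascoli theorem for functions by replacing each curve with a suitably chosen Lipschitz parametrization on a common domain. Let $L$ be a uniform upper bound for the lengths $\length(\gamma_i)$, and let $X$ denote the ambient compact metric space.

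First, I would fix, for each $i$, an arc-length parametrization of $\gamma_i$. Starting from any parametrization $\varphi_i:[a_i,b_i]\to X$, I would set $s_i(t)=\length(\varphi_i|_{[a_i,t]})$, which gives a non-decreasing continuous map $[a_i,b_i]\to[0,\ell_i]$ with $\ell_i=\length(\gamma_i)$. The key observation is that whenever $s_i(t_1)=s_i(t_2)$ with $t_1<t_2$ the restriction $\varphi_i|_{[t_1,t_2]}$ is constant, so the formula $\widetilde{\varphi}_i(s_i(t))=\varphi_i(t)$ unambiguously defines a map $\widetilde{\varphi}_i:[0,\ell_i]\to X$ whose image is $\gamma_i$. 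By construction $d(\widetilde\varphi_i(s),\widetilde\varphi_i(s'))\le|s-s'|$, i.e.\ $\widetilde\varphi_i$ is $1$-Lipschitz. Rescaling the domain, I would define $\psi_i:[0,1]\to X$ by $\psi_i(\tau)=\widetilde\varphi_i(\ell_i\tau)$, which is $\ell_i$-Lipschitz and therefore $L$-Lipschitz.

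Now the family $\{\psi_i\}$ consists of maps from $[0,1]$ into the compact metric space $X$, is pointwise relatively compact (the values lie in $X$) and equicontinuous (all maps share the Lipschitz constant $L$). The classical Arzelà–Ascoli theorem for maps into a compact metric space yields a subsequence $\psi_{i_k}$ that converges uniformly on $[0,1]$ to some continuous map $\psi:[0,1]\to X$. The uniform limit $\psi$ is itself $L$-Lipschitz, so its image $\gamma:=\psi([0,1])$ is a curve. Since the maps $\psi_{i_k}$ are parametrizations of $\gamma_{i_k}$ sharing the common domain $[0,1]$ and converging uniformly to a parametrization of $\gamma$, the subsequence $\{\gamma_{i_k}\}$ converges uniformly to $\gamma$ in the sense of the definition recalled in the paper.

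The step I expect to require the most care is the construction of $\widetilde\varphi_i$: one must verify that the arc-length function is well-defined, continuous, and that the quotient map descending $\varphi_i$ through $s_i$ is genuinely $1$-Lipschitz. This relies on the additivity of length over subintervals and on the fact that if $s_i$ is constant on $[t_1,t_2]$ then so is $\varphi_i$, both of which are standard but worth stating explicitly. Degenerate cases, such as $\ell_i=0$ (where $\psi_i$ is simply constant) or subsequences whose lengths tend to $0$ (where the limit curve is a single point), fit into the framework without modification.
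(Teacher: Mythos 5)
Your argument is correct and is the standard proof: the paper itself does not prove this statement but delegates it to Theorem 2.5.14 of \cite{BBI2001}, whose proof is exactly your reduction — constant-speed (arc-length) reparametrization on a common domain $[0,1]$, yielding a uniformly Lipschitz (hence equicontinuous) family into a compact space, followed by the classical Arzel\`a--Ascoli theorem. The points you flag as needing care (well-definedness of the quotient through the arc-length function, the $1$-Lipschitz bound, and the degenerate case $\ell_i=0$) are handled correctly.
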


We also note one important property (the lower semi-continuity of length) of the limit curve in the above assertion (see e.g. Proposition 2.3.4 in \cite{BBI2001}).

\begin{prop}\label{propAAn}
Given a sequence of rectifiable curves $\{\gamma_i\}_{i\in \mathbb{N}}$ which converges pointwise to $\gamma$
{\rm(}with respect to parameterizations with the same domain{\rm)}.
Then the inequality $\lim\limits_{i\to \infty} \inf \length (\gamma_i)\geq \length (\gamma)$ holds.
\end{prop}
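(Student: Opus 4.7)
The plan is to work directly from the definition of length as a supremum of polygonal approximations, and to observe that evaluating the polygonal sum at a fixed partition is continuous in the parameterization data. Fix parameterizations $\varphi_i, \varphi \colon [a,b] \to \mathbb{R}^2$ with common domain such that $\varphi_i$ parameterizes $\gamma_i$, $\varphi$ parameterizes $\gamma$, and $\varphi_i(t) \to \varphi(t)$ pointwise on $[a,b]$.

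First I would fix an arbitrary finite increasing partition $a = t_0 < t_1 < \cdots < t_m = b$. By the very definition of $\length$ as a supremum, the polygonal length
\[
L_i := \sum_{k=1}^m d\bigl(\varphi_i(t_{k-1}), \varphi_i(t_k)\bigr)
\]
satisfies $L_i \leq \length(\gamma_i)$ for every $i$. Next, because $d$ is jointly continuous on $\mathbb{R}^2 \times \mathbb{R}^2$ and $\varphi_i(t_k) \to \varphi(t_k)$ at each of the finitely many nodes $t_k$, the sum $L_i$ converges as $i \to \infty$ to $\sum_{k=1}^m d(\varphi(t_{k-1}), \varphi(t_k))$. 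Combining these two facts yields
\[
\sum_{k=1}^m d\bigl(\varphi(t_{k-1}), \varphi(t_k)\bigr) \;=\; \lim_{i\to\infty} L_i \;\leq\; \liminf_{i\to\infty} \length(\gamma_i).
\]
Finally, since the right-hand side is independent of the partition, taking the supremum on the left over all finite increasing partitions of $[a,b]$ produces $\length(\gamma) \leq \liminf_{i\to\infty} \length(\gamma_i)$, which is the claimed inequality.

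I do not anticipate a real obstacle: the argument is the standard Fatou-type lower semicontinuity, resting only on passing to a limit at finitely many parameter values and on joint continuity of $d$. The single mild subtlety is that a pointwise limit of continuous maps need not itself be continuous; however, this plays no role in the chain of inequalities above, because each polygonal sum used to define $\length(\gamma)$ depends only on values of $\varphi$ at a finite partition, and it is built into the statement of the proposition that $\gamma$ is itself a curve, so an admissible parameterization $\varphi$ exists and the supremum defining $\length(\gamma)$ makes sense.
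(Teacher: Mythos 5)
Your argument is correct and complete: bounding each polygonal sum over a fixed finite partition by $\length(\gamma_i)$, passing to the limit at the finitely many nodes, and then taking the supremum over partitions is exactly the standard proof of lower semicontinuity of length. The paper itself gives no proof of this proposition but only cites Proposition 2.3.4 of \cite{BBI2001}, whose proof is this same argument, so your approach matches the intended one.
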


The following property (of the monotonicity of perimeter) of convex figures is well-known (see e.g. \cite[\S 7]{BoFe1987}).

\begin{prop}\label{monotper}
If convex figures $K_1$ and $K_2$ in the Euclidean plane are such that $K_1\subset K_2$,
then $\per(K_1) \leq \per(K_2)$, and the equality holds if and only if $K_1=K_2$.
\end{prop}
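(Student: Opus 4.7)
The plan is to derive the monotonicity from the classical Cauchy formula in the plane,
\begin{equation*}
\per(K) = \int_0^{\pi} w_K(\theta)\, d\theta,
\end{equation*}
where $w_K(\theta)$ denotes the length of the orthogonal projection of $K$ onto a line of direction $\theta$. This identity covers the degenerate case uniformly: for a segment $[A,B]$ of length $L$ aligned with direction $\theta_0$ one has $w_{[A,B]}(\theta)=L|\cos(\theta-\theta_0)|$, and $\int_0^\pi L|\cos(\theta-\theta_0)|\,d\theta = 2L$, which matches the adopted convention $\per([A,B])=2L$.

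Given $K_1\subset K_2$, orthogonal projection onto any line is inclusion-preserving, so $w_{K_1}(\theta)\leq w_{K_2}(\theta)$ pointwise in $\theta$. Integrating over $[0,\pi]$ immediately yields $\per(K_1)\leq \per(K_2)$.

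For the equality clause I would argue contrapositively. Suppose $K_1\subsetneq K_2$ and pick $p\in K_2\setminus K_1$. By the hyperplane separation theorem applied to the point $p$ and the compact convex set $K_1$, there exists a unit direction $n=(\cos\phi,\sin\phi)$ with $\langle n,p\rangle > \sup_{x\in K_1}\langle n,x\rangle$. Since $p\in K_2$, the projection of $K_2$ onto the line of direction $\phi$ strictly contains that of $K_1$, so $w_{K_2}(\phi)>w_{K_1}(\phi)$. The support function of a convex figure depends continuously on the direction, so $w_{K_2}-w_{K_1}$ is continuous and strictly positive on an open arc around $\phi$, forcing the integrated inequality to be strict: $\per(K_1)<\per(K_2)$.

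The only real obstacle is having Cauchy's formula at our disposal for all convex figures, including degenerate ones; the authors avoid the issue by citing \cite{BoFe1987}. A self-contained alternative is to use the nearest-point projection $\pi\colon \mathbb{R}^2\to K_1$, which is $1$-Lipschitz and whose restriction to $\bd K_2$ surjects onto $\bd K_1$ (via outward normal rays from $\bd K_1$), so that one-dimensional Hausdorff measure gives $\per(K_1)=\mathcal{H}^1(\bd K_1)\leq \mathcal{H}^1(\bd K_2)=\per(K_2)$; strict inequality in the case $K_1\subsetneq K_2$ is then deduced from essentially the same separation argument applied to a small piece of $\bd K_2$ outside $K_1$.
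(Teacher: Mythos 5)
Your argument is correct, but it is worth noting that the paper does not prove this proposition at all: it is stated as a well-known fact with a citation to Bonnesen--Fenchel \cite[\S 7]{BoFe1987}, so any actual proof is necessarily a ``different route''. Your main line via Cauchy's formula $\per(K)=\int_0^{\pi} w_K(\theta)\,d\theta$ is complete: monotonicity of widths under inclusion gives the inequality, and the separation argument plus continuity of $w_{K_2}-w_{K_1}$ gives strictness when $K_1\subsetneq K_2$; you also correctly check that the formula reproduces the paper's convention $\per([A,B])=2\,d(A,B)$ for degenerate figures, which matters because the paper applies the proposition to convex hulls that could in principle be segments. What this buys over the paper's citation is a self-contained treatment (modulo Cauchy's formula itself) that covers the equality case explicitly, which the paper uses in Proposition \ref{prop.minc.3} and Remark \ref{rem.eq.1}. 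One caveat: your ``self-contained alternative'' via the $1$-Lipschitz nearest-point projection and $\mathcal{H}^1(\bd K_1)\leq \mathcal{H}^1(\bd K_2)$ does not quite match the stated convention when $K_1$ is degenerate, since then $\mathcal{H}^1(\bd K_1)=d(A,B)$ while $\per(K_1)=2\,d(A,B)$; to recover the factor $2$ you would need a covering-multiplicity (degree two) argument for the projection of $\bd K_2$ onto the segment, so as written that sketch only proves the inequality for nondegenerate $K_1$. Since the Cauchy-formula argument already handles all cases, this does not affect the validity of your proof.
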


We need also the following well-known result (it could be proved using the Crofton formula, see e.g. \cite[pp. 594--595]{AkVys2017}):

\begin{prop}\label{curvconvhull}
Let $\varphi:[c,d] \rightarrow \mathbb{R}$ be a parametric continuous curve with $\varphi(c)=\varphi(d)$.
Then the length of the curve $\gamma=\{\varphi(t)\,|\, t\in [c,d]\}$ is greater or equal to $\per (\co(\gamma))$.
Moreover, the equality holds if and only if $\gamma$ is closed convex curve.
\end{prop}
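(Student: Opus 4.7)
My approach to Proposition \ref{curvconvhull} is the classical Cauchy--Crofton argument, which the excerpt explicitly invokes. Assume $\gamma$ is rectifiable (otherwise both the inequality and the equality statement are trivial). Using $\varphi(c)=\varphi(d)$, identify the parameter interval with a circle $S^1$. For each affine line $L\subset\mathbb R^2$ set $n_\gamma(L):=\#\{t\in S^1:\varphi(t)\in L\}$ and let $d\mu$ be the standard rigid-motion invariant measure on the space $\mathcal L$ of affine lines, normalized so that the Cauchy--Crofton identity reads
\[
\length(\gamma)=\tfrac12\int_{\mathcal L}n_\gamma(L)\,d\mu(L).
\]
Applied to $\bd K$ for a non-degenerate convex figure $K$ it becomes $\per(K)=\tfrac12\int_{\mathcal L}n_{\bd K}(L)\,d\mu(L)$, and crucially $n_{\bd K}(L)=2$ for $\mu$-a.e.\ $L$ with $L\cap\intt K\neq\emptyset$ while $n_{\bd K}(L)=0$ for $L$ disjoint from $K$. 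The degenerate case $\co(\gamma)\subset$ a line I would dispose of separately by a direct one-dimensional argument (any closed loop in $[A,B]$ covering $\co(\gamma)$ must contain both endpoints and hence traverse $[A,B]$ back and forth).

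Taking $K=\co(\gamma)$ in the non-degenerate case, the main inequality reduces to the pointwise bound $n_\gamma(L)\geq n_{\bd K}(L)$ for $\mu$-a.e.\ $L$. When $L\cap K=\emptyset$ this is immediate. When $L$ meets $\intt K$, both open half-planes cut out by $L$ contain interior points of $K=\co(\gamma)$, and hence points of $\gamma$ itself. The continuous signed-distance function $f:S^1\to\mathbb R$, $f(t)=\pm d(\varphi(t),L)$, therefore takes values of both signs; since removing a single point from $S^1$ keeps it connected, the zero set of $f$ cannot be a singleton, so $f$ has at least two zeros. Thus $n_\gamma(L)\geq 2=n_{\bd K}(L)$, and integration over $\mathcal L$ yields $\length(\gamma)\geq\per(\co(\gamma))$.

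For the equality assertion, if $\gamma=\bd\co(\gamma)$ (the definition of a closed convex curve) equality is immediate. Conversely, if $\length(\gamma)=\per(\co(\gamma))$, then the pointwise Crofton inequality is $\mu$-a.e.\ sharp, forcing $n_\gamma(L)=2$ for $\mu$-a.e.\ line $L$ meeting $\intt\co(\gamma)$. By Milman's theorem $\ext\co(\gamma)\subset\gamma$. Suppose for contradiction $\gamma\neq\bd\co(\gamma)$. In the subcase $\gamma\subsetneq\bd\co(\gamma)$, any $p\in\bd\co(\gamma)\setminus\gamma$ is non-extreme, and the maximal open arc in $\bd\co(\gamma)\setminus\gamma$ through $p$ must be an open line segment $(A,B)$ with $A,B\in\gamma$ (else it would contain an interior extreme point, contradicting Milman); for a one-parameter family of lines parallel to $(AB)$ on the interior side of $\co(\gamma)$ one then verifies that $\gamma$ cannot provide the required two transverse intersections on a positive $\mu$-measure set, contradicting $n_\gamma=2$ a.e. In the subcase $\gamma\cap\intt\co(\gamma)\neq\emptyset$, a more delicate local argument at an interior point $q\in\gamma$, combined with the fact that $\gamma$ must still reach the extreme points lying on both sides of lines near $q$, produces a positive-$\mu$-measure family of lines with $n_\gamma\geq 3$, again a contradiction. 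The principal obstacle I foresee is making these two final local counting arguments fully rigorous; the remainder is the standard Crofton comparison.
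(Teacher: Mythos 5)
The paper itself offers no proof of Proposition \ref{curvconvhull}: it is quoted as a well-known fact with a pointer to the Crofton formula and to \cite{AkVys2017}, so your proposal follows exactly the route the paper gestures at rather than competing with an argument in the text. Your proof of the inequality is correct and essentially complete: the reduction to the pointwise bound $n_\gamma(L)\geq n_{\bd K}(L)$ for a.e.\ line is the right move, the observation that $n_{\bd K}(L)=2$ off a null set of lines is standard, and your argument that the zero set of the signed distance $f:S^1\to\mathbb{R}$ cannot be a singleton (because $S^1$ minus one point is still connected while $f$ changes sign) cleanly gives $n_\gamma(L)\geq 2$ for every line meeting $\intt\co(\gamma)$. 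The degenerate case and the ``if'' direction of the equality statement are also fine.

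The genuine gap is in the direction ``equality implies $\gamma=\bd\co(\gamma)$'', and you flag it yourself. Two concrete points. First, in the subcase $\gamma\subsetneq\bd\co(\gamma)$ your description of the contradiction is backwards: your own argument shows $n_\gamma\geq 2$ for every line through the interior, so the problem is not that $\gamma$ fails to provide two intersections but that it provides too many. Since $\gamma$ is then a closed connected proper subset of the Jordan curve $\bd\co(\gamma)$, it is an arc with distinct endpoints, and the same $S^1$-connectedness argument applied to $h\circ\varphi$ (with $h$ a homeomorphism of that arc onto $[0,\ell]$) shows that every relative interior point of the arc is hit by the loop at least twice; hence every chord with both endpoints in the relative interior of the arc has $n_\gamma\geq 4$, and such chords form a positive-measure family. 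That closes this subcase, but it is not what you wrote. Second, in the subcase $\gamma\cap\intt\co(\gamma)\neq\emptyset$ the claim that $n_\gamma\geq 3$ holds on a positive-measure family is genuinely not automatic: for a line $L$ through an interior point $\varphi(t_0)$ of $\gamma$, the two zeros of $f$ forced by the sign change may a priori consist of $t_0$ itself and a single other parameter, giving $n_\gamma=2$; excluding this for a positive-measure set of lines needs a further argument that the sketch does not supply. As it stands, the inequality and the easy implication are proved, while the characterization of equality remains a plausible plan rather than a proof; since the paper later uses precisely that characterization (to conclude that certain curves are closed convex curves), this half would need to be completed.
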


Now, we are going to prove the following

\begin{prop}\label{pr.part1}
For a given convex figure $K\subset \mathbb{R}^2$, there is a curve $\gamma_0$
of minimal length among all curves $\gamma$ satisfying the condition
$K \subset \co(\gamma)$.
\end{prop}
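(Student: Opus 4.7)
The plan is a standard compactness argument using the Arzelà--Ascoli theorem for curves (Proposition \ref{propAA}) and the lower semi-continuity of length (Proposition \ref{propAAn}).

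First, I would set $L:=\inf\{\length(\gamma):\gamma\text{ is a curve with }K\subset\co(\gamma)\}$. The infimum is finite because, for instance, taking $\gamma$ to be the boundary $\bd(K)$ (parameterized as a closed curve) gives $\co(\gamma)=K$ and $\length(\gamma)=\per(K)<\infty$. Next I would choose a minimizing sequence $\{\gamma_i\}_{i\in\mathbb{N}}$ with $K\subset\co(\gamma_i)$ and $\length(\gamma_i)\to L$. Since the lengths are uniformly bounded, so are the diameters of the $\gamma_i$'s (as $\diam(\gamma_i)\leq\length(\gamma_i)$).

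The second step is to localize the curves in a common compact set so that Proposition \ref{propAA} applies. Here I would argue as follows: pick any $p\in K$. Since $p\in\co(\gamma_i)$, Carathéodory's theorem gives points $q_{i,1},q_{i,2},q_{i,3}\in\gamma_i$ with $p$ a convex combination of them; in particular, $\gamma_i$ meets the closed disk of radius $\diam(\gamma_i)$ around $p$. Combined with the diameter bound, this forces each $\gamma_i$ (up to no translation at all --- it is already well-positioned) to lie in one common closed ball $\overline B$ around $K$. Applying Proposition \ref{propAA} inside $\overline B$, I extract a subsequence $\gamma_{i_k}$ that converges uniformly to a limit curve $\gamma_0$.

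The third step is to check the two properties of $\gamma_0$. For the length, Proposition \ref{propAAn} gives $\length(\gamma_0)\leq\liminf_k\length(\gamma_{i_k})=L$. For the covering property $K\subset\co(\gamma_0)$, I would take an arbitrary $p\in K$, write $p=\sum_{j=1}^{3}\lambda_{j}^{(k)}q_{j}^{(k)}$ with $q_{j}^{(k)}\in\gamma_{i_k}$ as above, and pass to a subsequence along which the coefficients $\lambda_j^{(k)}$ converge (compactness of the simplex) and the $q_j^{(k)}$ converge to some $q_j$. Uniform convergence of the parameterizations implies Hausdorff convergence of the images, hence each $q_j\in\gamma_0$; the limit identity $p=\sum_j\lambda_jq_j$ then places $p$ in $\co(\gamma_0)$. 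Thus $\gamma_0$ is admissible, which together with $\length(\gamma_0)\leq L$ forces $\length(\gamma_0)=L$.

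The main obstacle is the localization step: a priori the minimizing curves could, in principle, be bad parameterizations with long excursions or be placed anywhere in the plane, and one must show they can be taken to lie in a common compact set before invoking Arzelà--Ascoli. Once the Carathéodory observation pins each $\gamma_i$ to a neighborhood of $K$ of size controlled by $L$, the rest of the argument is routine. The only other subtle point is that uniform convergence of parameterized curves transfers to Hausdorff convergence of their images, which is what is needed to conclude the $\co(\gamma_0)\supset K$ property.
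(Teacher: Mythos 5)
Your proposal is correct and follows essentially the same compactness argument as the paper: a minimizing sequence, localization of all curves in one compact ball, Arzel\`a--Ascoli (Proposition \ref{propAA}), and lower semi-continuity of length (Proposition \ref{propAAn}). The only differences are cosmetic --- you localize via $\diam(\gamma_i)\leq\length(\gamma_i)$ together with the fact that $K\subset\co(\gamma_i)$ forces $\gamma_i$ to pass near $K$, whereas the paper uses a perpendicular support-line construction through an interior point of $K$; and your Carath\'eodory argument for $K\subset\co(\gamma_0)$ spells out a step the paper only asserts.
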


\begin{proof} If $\intt(K) =\emptyset$, then the proposition is trivial. In what follows, we assume $\intt(K) \neq \emptyset$.
Denote by $\Delta(K)$ the set of all planar curve $\gamma$ such that $K \subset \co(\gamma)$.
Let us consider $M=\inf \left\{ \length (\gamma)\,|\, \gamma \in \Delta(K) \right\}$.
It is clear that $M\leq \per (K)$, since $\bd(K)$ could be considered as a curve $\gamma$.
Now, we consider the sequence of curves $\{\gamma _i\}_{i\in \mathbb{N}}$ from $\Delta(K)$ such that $\length( \gamma_i) \to M$ as $i \to \infty$.
Without loss of generality we may assume that $\length( \gamma_i) \leq  M+1$ for all $i =1,2,3,\dots$.

Let us take a point $O \in \intt(K)$. There is $r>0$ such that the ball with center $O$ and radius $r$ is inside $K$.
For a fixed $i \in \mathbb{N}$, we consider the point $C_i \in \gamma_i$ such that
$d(C_i,O)=\max \left\{ d(x, O), x \in \gamma_i \right\}$ and the straight line $l_i$  passing through $O$ perpendicular to the straight line $OC_i$.
Since $K \subset \co (\gamma_i)$, there is a point $D_i \in \gamma_i$ such that the line segment $[C_i,D_i]$ intersects $l_i$.
This means that $M+1\geq \length (\gamma_i) \geq d(C_i,D_i)\geq d(C_i,O)\geq r>0$. It implies that $M\geq r >0$ and
$$
\gamma_i \subset B(O,M+1):= \left\{ x\in \mathbb{R}^2, \, d(x,O) \leq M+1 \right\}.
$$
Since the ball  $B(O,M+1)$ is compact
and the lengths of the curves $\gamma_i$, $i =1,2,3,\dots$, are uniformly bounded, then  the sequence $\{\gamma _i\}$ has
an uniformly converging subsequence by Proposition \ref{propAA}.
Passing to a subsequence if necessary, we can assume that the sequence $\{\gamma _i\}_{i\in \mathbb{N}}$ converges uniformly to some curve $\gamma_0$.
Since $K \subset \co(\gamma_i)$ for $i =1,2,3,\dots$, then $K \subset \co(\gamma_0)$ too.
The lower semi-continuity of length (see Proposition \ref{propAAn}) implies
$M=\lim\limits_{i\to \infty} \length (\gamma_i)\geq \length (\gamma_0)$, therefore, $\length (\gamma_0)=M$.
This proves the proposition.
\end{proof}

\begin{center}
\begin{figure}[t]
\centering\scalebox{1}[1]{\includegraphics[angle=0,totalheight=1.5in]
{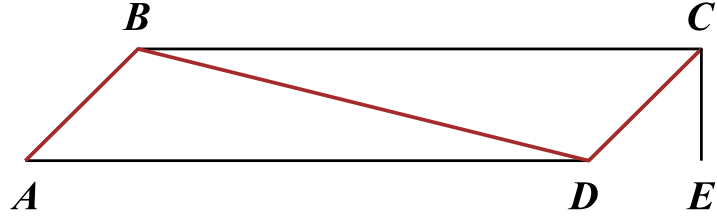}}
\caption{Illustration to Remark \ref{rem.nonc}: a non-convex shortest curve $\gamma$.}
\label{Fig1}
\end{figure}
\end{center}

\begin{remark}\label{rem.nonun}
Note that the curve $\gamma_0$ in Proposition \ref{pr.part1} may not be unique.
For instance, if $K$ is an equilateral triangle, then the union of any two of its sides is such a curve.
\end{remark}

\begin{remark}\label{rem.nonc}
Note also that the curve $\gamma_0$ in Proposition \ref{pr.part1} could be non-convex. For instance, let $K$ be the parallelogram $ABCD\subset \mathbb{R}^2$ with
$A=(0,0)$, $B=(1,1)$, $C=(t+1,1)$, and $D=(t,0)$, where $t\geq 1$. It is easy to see that the broken line $ABCE$ with $E=(t+1,0)$
is one of the shortest convex curves, whose convex hull cover $K$, and its length is $1+\sqrt{2}+t$, see Fig.~\ref{Fig1}.
On the other hand, the length of the broken line $ABDC$ (which convex hull is $K$) is equal to $2\sqrt{2}+\sqrt{2-2t+t^2}$.
It is easy to check that $2\sqrt{2}+\sqrt{2-2t+t^2}< 1+\sqrt{2}+t$ for $t>(3\sqrt{2}+2)/4 \approx 1.56066$.
\end{remark}

The above discussion leads to the following natural problem.

\begin{problem}
Give a comprehensive description of the class of planar curves $\gamma$ with the following property: there is a compact convex figure $K\subset \mathbb{R}^2$
such that $\gamma$ is the shortest curves among all curves, whose convex hulls cover $K$.
\end{problem}

\smallskip

In the next section, we consider a more detail information about any curve
of shortest length among all curves $\gamma$ satisfying the condition
$K \subset \co(\gamma)$ for a given $K$.

\section{Some properties of shortest curves $\gamma$ with $K\subset \co(\gamma)$}\label{sect.3}

Let $U\subset \mathbb{R}^2$ be a convex figure.
We say that a straight line $l\subset \mathbb{R}^2$ divides $U$ into $U_1$ and $U_2$,
if $U_1$ and $U_2$ are convex figures lying in different half-planes relatively $l$, such that $U=U_1\cup U_2$ and $U_1\cap U_2=U\cap l$.

We need the following two simple results.

\begin{lemma}\label{l.div.two}
Let $U\subset \mathbb{R}^2$ be a convex figure and let us consider some points $E,F \in\ext(U)$. Then the straight line $l=EF$
divides $U$ into convex figures $U_1$ and $U_2$ such that $U_i=\co(\ext(U)\cap U_i)$, $i=1,2$.
\end{lemma}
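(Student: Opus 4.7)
The plan is to first pin down the geometric picture of how $l$ meets $U$, then argue at the level of extreme points of the two pieces and invoke the fact, already cited in the introduction, that a convex figure equals the convex hull of its extreme points.

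First I would verify that $l \cap U = [E,F]$. The intersection $l \cap U$ is convex and lies on a line, hence is a closed segment $[P,Q]$ with $P,Q \in \bd(U)$. If $E$ were strictly between $P$ and $Q$ on this segment, then $E$ would lie in the open segment $(P,Q) \subset U$, contradicting $E \in \ext(U)$; likewise for $F$. So $\{E,F\} = \{P,Q\}$. Then $U_1,U_2$ are defined as $U \cap H_1$ and $U \cap H_2$, where $H_1,H_2$ are the two closed half-planes bounded by $l$; each is a convex figure, their union is $U$, and their intersection is $[E,F] = U \cap l$, so $l$ does divide $U$ in the sense of the definition.

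The core step is to show $\ext(U_i) = \ext(U) \cap U_i$. The inclusion $\ext(U) \cap U_i \subseteq \ext(U_i)$ is immediate: if $x \in \ext(U) \cap U_i$ and $x=(1-t)y+tz$ with $y,z \in U_i \subseteq U$ and $t \in (0,1)$, then extremeness in $U$ forces $y=z=x$. For the reverse inclusion, take $x \in \ext(U_i)$ and split into two cases. If $x \notin l$, then $x$ lies in the open half-plane, so any open segment in $U$ through $x$ has a sub-segment around $x$ that remains in that open half-plane, hence in $U_i$; if $x$ were not extreme in $U$, this would produce a non-trivial open segment in $U_i$ around $x$, contradicting $x \in \ext(U_i)$. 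If $x \in l$, then $x \in l \cap U_i \subseteq l \cap U = [E,F]$, and extremeness in $U_i$ forces $x$ to be an endpoint of $[E,F]$, i.e.\ $x \in \{E,F\} \subseteq \ext(U)$.

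Once $\ext(U_i) = \ext(U) \cap U_i$ is established, the lemma follows by applying the cited fact $K = \co(\ext(K))$ to the convex figure $K = U_i$:
\[
U_i = \co(\ext(U_i)) = \co(\ext(U) \cap U_i).
\]
I do not expect any serious obstacle here; the only subtle point is checking that $l$ cuts $U$ \emph{exactly} along $[E,F]$ and does not pick up additional boundary points outside $[E,F]$, which is precisely what extremeness of $E$ and $F$ rules out.
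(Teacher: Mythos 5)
Your proof is correct and follows essentially the same route as the paper: both arguments reduce the lemma to the inclusion $\ext(U_i)\subset\ext(U)$ and then apply $U_i=\co\bigl(\ext(U_i)\bigr)$. The only difference is that the paper asserts this inclusion without comment, while you verify it explicitly (via $l\cap U=[E,F]$ and the two cases $x\notin l$, $x\in l$), which is a useful but not divergent elaboration.
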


\begin{proof}
It is clear that $\co(\ext(U)\cap U_i)\subset U_i$. Let us suppose that $\co(\ext(U)\cap U_i)\neq U_i$.
Then there is a point $C\in \ext (U_i)$ such that $C \not\in \co(\ext(U)\cap U_i)$. On the other hand,
$\ext (U_i)\subset \ext (U)$ and we obtain the contradiction.
\end{proof}

\begin{center}
\begin{figure}[t]
\centering\scalebox{1}[1]{\includegraphics[angle=0,totalheight=2.0in]
{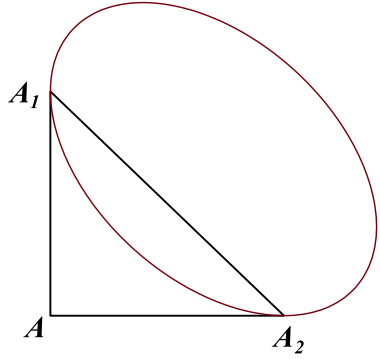}}
\caption{Illustration to Lemma \ref{l.add.point}: the convex figure $U$ and ${\vartriangle}A_1AA_2$.}
\label{Fig1.5}
\end{figure}
\end{center}

\begin{lemma}\label{l.add.point}
Let $U\subset \mathbb{R}^2$ be a convex figure. Let us suppose that a point $A \not\in U$ and points $A_1,A_2 \in U$
are such that the straight lines $AA_1$ and $AA_2$ are support lines for $U$ and $AA_1 \perp AA_2$. Then
$d(A,A_1)+\per(U) > \per \bigl(\co(U\cup\{A\})\bigr)$.
\end{lemma}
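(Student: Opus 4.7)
The plan is a direct perimeter accounting. First I would unpack the hypothesis geometrically: since $AA_1$ and $AA_2$ are perpendicular support lines of $U$ meeting at $A$, the convex figure $U$ lies inside the closed right-angled wedge with vertex $A$ bounded by the rays from $A$ through $A_1$ and through $A_2$, and meets these two rays exactly at $A_1$ and $A_2$. Consequently the chord $[A_1,A_2]$ is contained in $U$, and $\bd(U)$ splits into two arcs from $A_1$ to $A_2$; let $\alpha$ be the one lying on the side of the line $A_1A_2$ that contains $A$ (in the degenerate case $U\subset[A_1,A_2]$ we take $\alpha$ to be one traversal of the segment, consistent with the convention $\per(U)=2\,\length(U)$).

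Next I would describe $\co(U\cup\{A\})$: because $A$ is on the ``$\alpha$-side'' of $[A_1,A_2]$ and the lines $AA_1$, $AA_2$ are support lines of $U$ at $A_1$, $A_2$, the boundary of $\co(U\cup\{A\})$ is obtained from $\bd(U)$ by removing the arc $\alpha$ and inserting the broken line $[A_1,A]\cup[A,A_2]$. This gives
\begin{equation*}
\per\bigl(\co(U\cup\{A\})\bigr)=\per(U)-\length(\alpha)+d(A,A_1)+d(A,A_2),
\end{equation*}
so the inequality to be proved is equivalent to the clean statement $\length(\alpha)>d(A,A_2)$.

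This last inequality I would settle in two short steps. Since $\alpha$ is a continuous curve from $A_1$ to $A_2$, clearly $\length(\alpha)\geq d(A_1,A_2)$. Since $\triangle AA_1A_2$ has a right angle at $A$, Pythagoras yields
\begin{equation*}
d(A_1,A_2)=\sqrt{d(A,A_1)^2+d(A,A_2)^2}>d(A,A_2),
\end{equation*}
the strict inequality coming from $d(A,A_1)>0$, which holds because $A_1\in U$ while $A\notin U$. Chaining the two estimates gives $\length(\alpha)\geq d(A_1,A_2)>d(A,A_2)$, as required.

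The only real obstacle is the bookkeeping in the first two steps: verifying that $\bd\bigl(\co(U\cup\{A\})\bigr)$ really is obtained from $\bd(U)$ by exactly the advertised splice, uniformly across the generic and the degenerate cases (in particular when $U$ itself is a line segment). Once that identification is in place, the inequality reduces to an elementary application of Pythagoras.
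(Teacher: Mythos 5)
Your argument is correct and essentially identical to the paper's proof: the authors also splice $\bd\bigl(\co(U\cup\{A\})\bigr)$ as the far arc $\gamma^{\ast}$ of $\bd(U)$ together with $[A,A_1]\cup[A,A_2]$, and bound the length of the removed (near) arc from below by the chord, $\per(U)-\length(\gamma^{\ast})\geq d(A_1,A_2)>d(A,A_2)$, using the right angle at $A$ exactly as you do with Pythagoras. The only cosmetic difference is that you make the strictness ($d(A,A_1)>0$ since $A\notin U$) and the degenerate segment case explicit, while the paper treats the splice identification as clear.
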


\begin{proof}
Let us consider the triangle $A_1AA_2$ and
let $\gamma^{\ast}$ be a part of $\bd(U)$ between the points $A_1$ and $A_2$ such that $U \subset \co\bigl(\gamma^{\ast}\cup\{A\}\bigr)$ (see Fig.~\ref{Fig1.5}).
It is clear that  $\bd \bigl(\co(U\cup\{A\})\bigr)=\gamma^{\ast}\cup[A,A_1]\cup[A,A_2]$.
It is clear also that $\per(U)-\length(\gamma^{\ast})$ is the length of the complementary to $\gamma^{\ast}$ part of $\bd(U)$ between the points $A_1$ and $A_2$, hence
$\per(U)-\length(\gamma^{\ast})\geq d(A_1,A_2)> d(A,A_2)$ and we get
\begin{eqnarray*}
d(A,A_1)+\per(U) > d(A,A_1)+\length(\gamma^{\ast})+ d(A,A_2)= \per \bigl(\co(U\cup\{A\})\bigr),
\end{eqnarray*}
that proves the lemma.
\end{proof}

\smallskip

{\bf Let us fix a curve $\gamma$ with an arc length parametrization $\varphi(t)$, $t\in[a,b]$,
such that $K \subset \co(\gamma)$ and has minimal possible length among all curve with this property.
We put $A:=\varphi(a)$, $B:=\varphi(b)$, and $\widetilde{K}:=\co(\gamma)$.}

\begin{lemma}\label{l.endp}
In the above notations, we have $A, B \in \ext( \widetilde{K})$ and $A\neq B$.
Moreover,  $K\cap [A,B]\neq \emptyset$.
\end{lemma}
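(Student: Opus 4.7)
I would prove each of the three claims by contradiction, exhibiting in each case a strictly shorter curve $\gamma^\ast$ with $K\subset\co(\gamma^\ast)$, contradicting the minimality of $\gamma$.

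For $A\in\ext(\widetilde K)$ (and symmetrically $B$): suppose $A\notin\ext(\widetilde K)$. Since $\ext(\widetilde K)$ is closed in the plane and contained in $\gamma$ (as $\widetilde K=\co(\gamma)=\co(\ext(\widetilde K))$ with $\gamma$ compact), we have $\delta:=d(A,\ext(\widetilde K))>0$. From the arc-length parametrization, $|\varphi(t)-A|\le t-a$, so every $P=\varphi(t_P)\in\ext(\widetilde K)$ satisfies $t_P\ge a+\delta$. For $0<\epsilon<\delta$, all of $\ext(\widetilde K)$ then lies in $\gamma_\epsilon:=\varphi([a+\epsilon,b])$, so $K\subset\widetilde K=\co(\ext(\widetilde K))\subset\co(\gamma_\epsilon)$, while $\length(\gamma_\epsilon)=\length(\gamma)-\epsilon<\length(\gamma)$: a contradiction.

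For $A\ne B$: if $A=B$, then $\gamma$ is closed, so Propositions~\ref{curvconvhull} and \ref{monotper} together with the competitor $\bd(K)$ force $\length(\gamma)=\per(\widetilde K)=\per(K)$, hence $\widetilde K=K$ and $\gamma=\bd(K)$. I then shorten $\bd(K)$: let $R$ be the axis-aligned bounding rectangle of $K$. If $K=R$, delete one open edge of $\bd(K)$; the remaining polyline keeps all four corners, has convex hull $K$, and length $\per(K)$ minus the edge length. If $K\ne R$, some corner $P$ of $R$ lies outside $K$, and the two sides of $R$ through $P$ are perpendicular supporting lines of $K$ touching $K$ at points $P_1,P_2$. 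Lemma~\ref{l.add.point} gives $\per(\co(K\cup\{P\}))<\per(K)+d(P,P_1)$, so $\gamma^\ast:=\bd(\co(K\cup\{P\}))\setminus(P_1,P)$ (open segment removed, endpoints kept) has convex hull $\co(K\cup\{P\})\supset K$ and length $<\per(K)=\length(\gamma)$, contradicting minimality.

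For $K\cap[A,B]\ne\emptyset$: suppose the contrary. Strictly separate the disjoint compact convex sets $K$ and $[A,B]$ by a line $\ell$, with $K$ in the open half-plane $H^-$ and $\{A,B\}$ in the open half-plane $H^+$. Set $a_1:=\inf\{t:\varphi(t)\in\overline{H^-}\}$ and $b_1:=\sup\{t:\varphi(t)\in\overline{H^-}\}$; continuity and strict separation give $a<a_1\le b_1<b$, so $\gamma^\ast:=\varphi([a_1,b_1])$ is strictly shorter than $\gamma$. Since $\varphi([a,a_1)\cup(b_1,b])\subset H^+$, any extreme point of $\widetilde K$ lying in $\overline{H^-}$ is attained at a parameter in $[a_1,b_1]$, hence belongs to $\gamma^\ast$; together with the endpoints $\varphi(a_1),\varphi(b_1)\in\ell$, this is intended to give $\co(\gamma^\ast)\supset\widetilde K\cap\overline{H^-}\supset K$, contradicting minimality.

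The main obstacle is this last step in Part~3. The endpoints $\varphi(a_1),\varphi(b_1)$ of $\gamma^\ast$ generally lie strictly inside the chord $\ell\cap\widetilde K=[P_1,P_2]$, so $\co(\gamma^\ast)$ could miss two little regions of $\widetilde K\cap\overline{H^-}$ adjacent to $P_1$ and $P_2$. The strict separation $d(K,\ell)>0$ keeps $K$ away from those regions, but making this precise needs $\ell$ to be chosen judiciously: for instance, taking $\ell$ to pass through two extreme points of $\widetilde K$ that already lie on $\gamma$, so that Lemma~\ref{l.div.two} applies directly and yields $\widetilde K\cap\overline{H^-}=\co(\ext(\widetilde K)\cap\overline{H^-})\subset\co(\gamma^\ast)$; or sliding $\ell$ close enough to $K$ that the two missed regions are too small to meet $K$.
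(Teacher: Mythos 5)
Your treatment of the first two claims is correct. The argument that $A,B\in\ext(\widetilde K)$ is essentially the paper's own truncation argument (the paper picks $\varepsilon$ with $\varphi([a,a+\varepsilon])\cap\ext(\widetilde K)=\emptyset$; you make the same point quantitatively through the arc-length parametrization). For $A\neq B$ you take a genuinely different route: the paper keeps $\widetilde K$ and cuts it with a support line through $A=B$ together with a perpendicular support line through an exterior point, applying Lemma~\ref{l.add.point} to $\widetilde K$ directly, whereas you first force $\length(\gamma)=\per(\widetilde K)=\per(K)$ and $\widetilde K=K$ by playing Proposition~\ref{curvconvhull} against the competitor $\bd(K)$ and Proposition~\ref{monotper}, and then build a shorter competitor from a corner of the bounding rectangle of $K$ via Lemma~\ref{l.add.point}. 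Both versions are valid (modulo degenerate $K$ with $\intt(K)=\emptyset$, which neither you nor the paper treats; for $K$ a single point the claim $A\neq B$ even fails, so an implicit nondegeneracy assumption is being used on both sides).

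The third claim, $K\cap[A,B]\neq\emptyset$, is where your proposal has a genuine gap, as you yourself concede: the decisive containment $K\subset\co(\gamma^\ast)$ for $\gamma^\ast=\varphi([a_1,b_1])$ is never established, and the obstruction is worse than ``two little regions'' near the chord. The part of $\widetilde K\cap\overline{H^-}$ missed by $\co\bigl((\ext(\widetilde K)\cap\overline{H^-})\cup\{\varphi(a_1),\varphi(b_1)\}\bigr)$ is bounded by arcs of $\bd(\widetilde K)$ emanating from the chord endpoints $P_1,P_2$, and such a sliver can reach boundary points of $\widetilde K$ far from $\ell$ --- exactly where $K$ is allowed to touch $\bd(\widetilde K)$. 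So neither the fact that $d(K,\ell)>0$ nor ``sliding $\ell$ closer to $K$'' by itself prevents $K$ from meeting the uncovered set. Your other suggested repair, choosing $\ell$ through two extreme points of $\widetilde K$ so that Lemma~\ref{l.div.two} applies, needs an existence proof: a line through two extreme points with $K$ on one closed side and \emph{both} $A$ and $B$ strictly on the other, which you do not provide and which is not obvious (extreme points may accumulate at $A$ or $B$, and you cannot invoke Corollary~\ref{cor.minc.1}, since it is proved later and rests on this lemma). For comparison, the paper introduces no auxiliary separating line at all: since $A,B\in\ext(\widetilde K)$, the line $AB$ meets $\widetilde K$ exactly in $[A,B]$, so $K$ is at positive distance from that line, and the paper deletes small arcs of $\gamma$ at both ends, arguing that the convex hull of the middle piece $\varphi([a+\varepsilon,b-\varepsilon])$ still contains $K$. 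Your Part~3 could plausibly be repaired in that spirit, but as written the key step is missing, so the proof of the lemma is incomplete.
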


\begin{proof}
Let us suppose that $A \not \in \ext( \widetilde{K})$, then there a sufficiently small $\varepsilon>0$ such that
$\varphi([a,a+\varepsilon]) \cap \ext( \widetilde{K}) = \emptyset$ (recall that  $\ext( \widetilde{K})$ is a closed set in $\mathbb{R}^2$).
Hence, if we modify $\gamma$ up to $\gamma_1:=\{\varphi(t)\,|\, t\in[a+\varepsilon,b]\}$ then we get a shorter curve with the same convex hull.
This contradictions shows that $A=\varphi(a) \in \ext( \widetilde{K})$. Similar arguments imply $B=\varphi(\beta) \in \ext( \widetilde{K})$.

Suppose that $B=A$. Let us consider a support line $l_1$ for $\widetilde{K}$ through the point $B$. Since
$B\in \ext(\widetilde{K})$, we may take a point $C \in l_1$ and a support line $l_2$ for $\widetilde{K}$ through $C$, such that $C\not \in \widetilde{K}$
and $l_2$ is perpendicular to $l_1$.
Now, take a point $D\in \widetilde{K}\cap l_2$.
Let $\gamma^{\ast}$ be a part of $\bd(\widetilde{K})$ between the points $B$ and $D$ such that $\widetilde{K} \subset \co\bigl(\gamma^{\ast}\cup[C,D]\bigr)$.
Lemma~\ref{l.add.point} and Proposition~\ref{curvconvhull} imply $d(C,D)+ \length (\gamma^{\ast}) < \per(\widetilde{K})\leq \length(\gamma)$.
Hence, the curve $\gamma^{\ast}\cup[C,D]$ is shorter that $\gamma$, and we get a contradiction due to
$\widetilde{K} \subset \co\bigl(\gamma^{\ast}\cup[C,D]\bigr)$. Therefore, $B\neq A$.

Let us suppose that $K \cap [A,B] =\emptyset$. Then
the distance $\min \{d(x,y)\,|\,x\in K, y \in l\}$ between $K$ and the straight line $AB=:l$
is positive (recall that $K \subset \widetilde{K}$ and $A,B$ are extreme points of $\widetilde{K}$).
Therefore, $K \subset \co \left\{\psi(t)\,|\,t\in [a+\varepsilon, b-\varepsilon] \right\}\subset \co (\gamma)$ for sufficiently small $\varepsilon >0$.
Since the curve $\gamma_2:=\left\{\psi(t)\,|\, t\in [a+\varepsilon, b-\varepsilon] \right\}$  is shorter than $\gamma$, we get a contradiction. This proves that
$K \cap [A,B] \neq \emptyset$.
\end{proof}
\smallskip

\begin{prop}\label{prop.minc.1}
Let us consider $\alpha, \beta \in [a,b]$ such that $\varphi(\alpha), \varphi(\beta) \in\ext( \widetilde{K})$. Then one of the following assertions holds:

1) $[\varphi(\alpha), \varphi(\beta)]\subset \bd(\widetilde{K})$;

2) the straight line $l$ through the points $\varphi(\alpha)$ and $\varphi(\beta)$ divided $\widetilde{K}$ into $\widetilde{K}_1$ and $\widetilde{K}_2$
such that $\left(\widetilde{K}_i\setminus [\varphi(\alpha), \varphi(\beta)] \right)\cap K \neq \emptyset$, $i=1,2$.
\end{prop}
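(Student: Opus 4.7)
Set $P:=\varphi(\alpha)$ and $Q:=\varphi(\beta)$. The plan is a proof by contradiction using the minimality of $\gamma$. If $P=Q$ then $[P,Q]=\{P\}\subset\bd(\widetilde K)$ and assertion 1) holds trivially, so I will assume $P\neq Q$ and, working toward 2), that 1) fails, i.e.\ $[P,Q]\not\subset\bd(\widetilde K)$. Since $P,Q\in\ext(\widetilde K)$ lie on $l$, each must be an endpoint of the segment $\widetilde K\cap l$ (otherwise it would lie in the relative interior of a chord of $\widetilde K$ contained in $l$, contradicting extremality), whence $\widetilde K\cap l=[P,Q]=:\sigma$. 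Because $\sigma\not\subset\bd(\widetilde K)$, the line $l$ is not a support line, so by Lemma~\ref{l.div.two} it divides $\widetilde K$ into two two-dimensional convex pieces $\widetilde K_1,\widetilde K_2$ satisfying $\widetilde K_i=\co(\ext(\widetilde K)\cap\widetilde K_i)$.

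I would then prove 2) by contradiction. If it fails then, after reindexing if needed, $(\widetilde K_1\setminus\sigma)\cap K=\emptyset$, so $K\cap\widetilde K_1\subset\sigma$ and consequently $K\subset\widetilde K_2\cup\sigma$. The goal is to construct a curve $\gamma'$ with $K\subset\co(\gamma')$ but $\length(\gamma')<\length(\gamma)$, contradicting the minimality of $\gamma$.

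To build $\gamma'$ I would consider the open set $E:=\varphi^{-1}(\widetilde K_1\setminus l)\subset[a,b]$ and its connected components. For a component $(s,t)\subset(a,b)$ one checks $\varphi(s),\varphi(t)\in\widetilde K_1\cap l=\sigma$, and I would replace $\gamma|_{[s,t]}$ by the straight chord $[\varphi(s),\varphi(t)]\subset\sigma$; a component of the form $[a,t)$ (which occurs exactly when $A\in\widetilde K_1\setminus l$) I would simply discard, so that $\gamma'$ begins at $\varphi(t)\in\sigma$, and the case $(s,b]$ is handled symmetrically. Concatenating the retained arcs of $\gamma$ with the inserted chords yields a continuous curve $\gamma'$. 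For length: each chord replacement strictly saves $(t-s)-d(\varphi(s),\varphi(t))>0$, because otherwise $\gamma|_{[s,t]}$ would coincide with a subsegment of $l$, contradicting $\varphi((s,t))\subset\widetilde K_1\setminus l$; each truncation strictly saves the positive length of the discarded arc; and $E\neq\emptyset$, because the two-dimensional $\widetilde K_1$ must possess extreme points in $\widetilde K_1\setminus l$ (otherwise $\widetilde K_1\subset l$), and such points are extreme points of $\widetilde K$ and therefore lie on $\gamma$. For inclusion: only arcs inside $\widetilde K_1\setminus l$ are removed, so every extreme point of $\widetilde K$ located in $\widetilde K_2$ survives in $\gamma'$, giving $\widetilde K_2\subset\co(\gamma')$ via Lemma~\ref{l.div.two}; the points $P=\varphi(\alpha)$ and $Q=\varphi(\beta)$ lie on $l$, hence outside $E$, and so are preserved in $\gamma'$, yielding $\sigma\subset\co(\gamma')$; combining, $K\subset\widetilde K_2\cup\sigma\subset\co(\gamma')$, which is the desired contradiction.

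The main obstacle I anticipate is the bookkeeping around components of $E$ that reach the parameter endpoints $a$ or $b$, i.e.\ when $A$ or $B$ belongs to $\widetilde K_1\setminus l$: there chord replacement is unavailable, and one must cleanly justify that the corresponding truncation does not erase any extreme point of $\widetilde K$ needed to cover $K$ from the $\widetilde K_2$ side -- and this is exactly where the standing assumption $K\cap\widetilde K_1\subset\sigma$ becomes essential.
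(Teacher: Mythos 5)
Your proposal is correct and follows essentially the same route as the paper: assume assertion 1) fails and, for contradiction, that one side $\widetilde{K}_1$ meets $K$ only in the chord; then use Lemma~\ref{l.div.two} to see that the extreme points of $\widetilde{K}$ lying in $\widetilde{K}_2$ (together with $\varphi(\alpha),\varphi(\beta)$) already force $K\subset\co(\gamma')$, and produce a strictly shorter admissible curve, contradicting minimality. The only difference is that the paper makes a single local modification (trimming an end or replacing one small subarc around one extreme point $\varphi(t_0)\in\widetilde{K}_1\setminus l$ by a chord), whereas you perform a global surgery on all excursions of $\gamma$ into $\widetilde{K}_1\setminus l$, which is also fine but requires the routine extra bookkeeping (possibly infinitely many components, continuity of the reparametrization, additivity of length) that you only sketch.
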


\begin{proof} Let us suppose that $[\varphi(\alpha), \varphi(\beta)]\not\subset \bd(\widetilde{K})$, then every $\widetilde{K}_i$, $i=1,2$,
has a point $C_i$ from $\ext(\widetilde{K})\setminus \{\varphi(\alpha), \varphi(\beta)\}$.
It is clear that $C_i=\varphi(t_0)$ for some $t_0\in [a,b]$.

If $\left(\widetilde{K}_i\setminus [\varphi(\alpha), \varphi(\beta)] \right)\cap K = \emptyset$, then
$K \subset \co \bigl(\ext (\widetilde{K}_j)\bigr)$, $j\in \{1,2\}\setminus \{i\}$, by Lemma~\ref{l.endp}.
Now, we will show how one can modify $\gamma$ to a curve $\gamma_1$ which is shorter than $\gamma$, but $K \subset \co(\gamma_1)$.

If $t_0=a$ ($t_0=b$), then we can take a sufficiently small
$\varepsilon>0$ such that
$\varphi([a,a+\varepsilon]) \cap  l =\emptyset$ (respectively $\varphi([b-\varepsilon,b]) \cap  l =\emptyset$).
Then we see that $K \subset \co \bigl(\ext (\widetilde{K}_j)\bigr)\subset \co(\gamma_1)$, where $\gamma_1=\{\varphi(t)\,|\, t\in[a+\varepsilon,b]\}$
(respectively, $\gamma_1=\{\varphi(t)\,|\, t\in[a,b-\varepsilon]\}$).
Hence, we have found a curve that is shorter than $\gamma$ and which convex hull contains $K$, that is impossible.

If $t_0\in (a,b)$, then we can take $t_1,t_2 \in [a,b]$, $t_1<t_2$,  such that $t_0 \in (t_1,t_2)$ and $\varphi([t_1,t_2])\cap l =\emptyset$.
Since $\varphi(t_0) \in \ext(\widetilde{K})$, then $\varphi([t_1,t_0])\neq [\varphi(t_1), \varphi(t_2)]$.
Now we consider a curve $\gamma_2=\left(\gamma \setminus \varphi([t_1,t_0])\right) \cup [\varphi(t_1), \varphi(t_2)]$.
Obviously, $\gamma_2$ is shorter than $\gamma$, but its convex hull still contains $K$.
This contradiction proves the proposition.
\end{proof}

\begin{corollary}\label{cor.minc.1}
Suppose that  $\varphi(t_0)$ is an extreme point of $\widetilde{K}$ and it is not isolated in the set $\ext(\widetilde{K})$.
Then $\varphi(t_0) \in K$.
\end{corollary}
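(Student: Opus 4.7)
Plan: I would proceed by contradiction. Assume $\varphi(t_0) \notin K$; by compactness of $K$ we have $d := d(\varphi(t_0), K) > 0$. Since $\varphi(t_0)$ is not isolated in $\ext(\widetilde{K})$, select a sequence $\{\varphi(\alpha_n)\}_{n \in \mathbb{N}} \subset \ext(\widetilde{K}) \setminus \{\varphi(t_0)\}$ with $\varphi(\alpha_n) \to \varphi(t_0)$. For each $n$, Proposition \ref{prop.minc.1} applied to the extreme points $\varphi(t_0)$ and $\varphi(\alpha_n)$ yields one of its alternatives 1) or 2), with $l_n$ the line through these points and $\widetilde{K}_1^{(n)}, \widetilde{K}_2^{(n)}$ the two pieces; the plan is to show that both alternatives fail once $n$ is large.

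Eliminating 1) is easy: $[\varphi(t_0), \varphi(\alpha_n)] \subset \bd(\widetilde{K})$ would force this segment to lie on a maximal flat edge of $\bd(\widetilde{K})$ incident to $\varphi(t_0)$. At an extreme boundary point of a planar convex figure at most two such flat edges can emanate (one per edge of the tangent cone), and each terminates at a fixed extreme point of $\widetilde{K}$ at positive distance from $\varphi(t_0)$. Since $\varphi(\alpha_n) \to \varphi(t_0)$, for $n$ large $\varphi(\alpha_n)$ cannot coincide with any of these at most two candidates, so 1) fails.

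The main work is ruling out 2). Passing to a subsequence I may assume that the unit chord directions $v_n := (\varphi(\alpha_n) - \varphi(t_0))/|\varphi(\alpha_n) - \varphi(t_0)|$ converge to a unit vector $v^*$, so $l_n$ converges to the line $l^*$ through $\varphi(t_0)$ in direction $v^*$. A standard convex-geometry argument shows that $l^*$ is a support line of $\widetilde{K}$ at $\varphi(t_0)$: $v^*$ is a limit of directions from $\varphi(t_0)$ to other boundary points of $\widetilde{K}$, so it lies along an edge of the tangent cone at $\varphi(t_0)$, and the line through $\varphi(t_0)$ in such a direction is always a support line of the planar convex figure $\widetilde{K}$. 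Let $H^{*+}$ denote the closed half-plane bounded by $l^*$ that contains $\widetilde{K}$, and label the cut pieces so that $\widetilde{K}_1^{(n)}$ lies on the side of $l_n$ approaching $H^{*-}$.

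The main obstacle is to prove $\diam(\widetilde{K}_1^{(n)}) \to 0$. By Lemma \ref{l.div.two}, $\widetilde{K}_1^{(n)} = \co(\ext(\widetilde{K}) \cap \widetilde{K}_1^{(n)})$, so it suffices to show that any sequence of extreme points $p_n \in \ext(\widetilde{K}) \cap \widetilde{K}_1^{(n)}$ has $p_n \to \varphi(t_0)$. By compactness, along a subsequence $p_n \to p^* \in \ext(\widetilde{K})$ (using that $\ext(\widetilde{K})$ is closed), and since $p_n$ lies on the $H^{*-}$-side of $l_n$ the limit satisfies $p^* \in \widetilde{K} \cap H^{*-} = \widetilde{K} \cap l^*$. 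The only extreme points of $\widetilde{K}$ on $l^*$ are $\varphi(t_0)$ itself and, if $\bd(\widetilde{K})$ contains a flat segment along $l^*$, the other endpoint $Z$ of that segment; but $Z$ lies on the side of $\varphi(t_0)$ opposite to the direction into which $l_n$ rotates from $l^*$ (since $\varphi(\alpha_n)$ lies on the tangent-cone edge transverse to the flat segment), so $Z \notin \widetilde{K}_1^{(n)}$ for $n$ large and thus $p^* \neq Z$. Hence $p^* = \varphi(t_0)$, whence $\diam(\widetilde{K}_1^{(n)}) \to 0$, and for $n$ large $\widetilde{K}_1^{(n)} \subset B(\varphi(t_0), d/2)$, which is disjoint from $K$. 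This contradicts the condition $\widetilde{K}_1^{(n)} \cap K \neq \emptyset$ from 2). With both alternatives failing for $n$ large, Proposition \ref{prop.minc.1} yields the desired contradiction, and so $\varphi(t_0) \in K$.
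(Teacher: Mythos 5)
Your argument is correct and is essentially the paper's own proof: take extreme points $\varphi(\alpha_n)\to\varphi(t_0)$, apply Proposition \ref{prop.minc.1} (the paper simply chooses its sequence with $[\varphi(t_0),\varphi(t_n)]\not\subset\bd(\widetilde{K})$, which is exactly your elimination of alternative 1), show that one of the two pieces cut off by $l_n$ has diameter tending to $0$ --- a step the paper dismisses as ``clear'' and you justify in detail via the limiting support line $l^*$ and Lemma \ref{l.div.two} --- and conclude from the closedness of $K$ (you phrase it as a contradiction with $d(\varphi(t_0),K)>0$, which is equivalent). One small patch: excluding $p^*=Z$ needs slightly more than ``$Z\notin\widetilde{K}_1^{(n)}$ for large $n$'' (sets varying with $n$ could still have points accumulating at $Z$); instead note that every point of $\widetilde{K}_1^{(n)}$ lies both in $H^{*+}$ and on the $H^{*-}$-side of $l_n$, which forces its component along $v^*$ relative to $\varphi(t_0)$ to be non-negative, so no sequence $p_n\in\widetilde{K}_1^{(n)}$ can converge to $Z$, which lies in direction $-v^*$ from $\varphi(t_0)$.
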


\begin{proof}
Let us take a sequence $\{t_n\}_{n\in \mathbb{N}}$, $t_n \in [a,b]$, such that all points $\varphi(t_n)$ are extreme for $\widetilde{K}$,
$\varphi(t_n)\neq \varphi(t_0)$, $[\varphi(t_0), \varphi(t_n)] \not \subset \bd(\widetilde{K})$,
and $\varphi(t_n)\to \varphi(t_0)$ as $n\to \infty$.
By Proposition \ref{prop.minc.1}, the straight line $l_n$ through the points $\varphi(t_n)$ and $\varphi(t_0)$ divides $\widetilde{K}$ into two convex figures,
each of them contains some point of $K$. Let $\widetilde{K}_n$ be a one of these two figures, which has a smaller diameter.
It is clear that $\diam(\widetilde{K}_n)\to 0$ as
$n \to \infty$. If $C_n\in \widetilde{K}_n \cap K$, then $C_n \to \varphi(t_0)$ as $n \to \infty$.
Since $K$ is closed, we get $\varphi(t_0)\in K$.
\end{proof}
\smallskip

By Lemma \ref{l.endp}, the points $A$ and $B$ are extreme points of $\widetilde{K}$.
If $A$ (respectively, $B$) is not an isolated point in the set $\ext(\widetilde{K})$, then
$A \in K$ (respectively, $B \in K$). The following proposition deals with the case, when
$A$ (or $B$) is isolated in $\ext(\widetilde{K})$.

\begin{prop}\label{prop.minc.2}
If $A=\varphi(a)$ is isolated in $\ext(\widetilde{K})$, then there are $\tau_1, \tau_2 \in (a,b]$, $\tau_1<\tau_2$, such that
the following assertions holds:

1) $[A,\varphi(\tau_1)]\cup [A,\varphi(\tau_2)]$ covers some neighborhood of $A$ in $\bd(\widetilde{K})$;

2) $\varphi([a,\tau_1])=[A,\varphi(\tau_1)]$,

3) $\varphi([a,\tau_2])\cup [A,\varphi(\tau_2)]$ is a closed convex curve;

4) $[A,\varphi(\tau_1)]\cap K \neq \emptyset$;

5) The angle between the line segments $[A,\varphi(\tau_1)]$ and $[A,\varphi(\tau_2)]$ is equal to $\pi/2$.

\noindent Similar results hold for $B=\varphi(b)$, if it is isolated in $\ext(\widetilde{K})$.
\end{prop}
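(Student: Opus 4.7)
I would proceed in three stages. \textit{Stage 1 (local structure and defining $\tau_1, \tau_2$):} Since $\ext(\widetilde K)$ is closed and $A$ is isolated in it, fix an open neighborhood $V$ of $A$ with $V \cap \ext(\widetilde K) = \{A\}$. Using the standard structural fact that any non-extreme boundary point of a convex figure lies in the relative interior of a maximal boundary segment whose endpoints are both extreme, together with $A$ being the only extreme point in $V$, the arc $V \cap \bd(\widetilde K)$ must consist of portions of two line segments $[A, P_1]$ and $[A, P_2]$ with $P_1, P_2 \in \ext(\widetilde K)$. Because every extreme point of $\co(\gamma)$ lies on $\gamma$, both $P_1, P_2 \in \gamma$. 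Let $\tau_1 \in (a, b]$ be the first time $\varphi$ reaches $\{P_1, P_2\}$, relabel so that $\varphi(\tau_1) = P_1$, and let $\tau_2$ be the first subsequent time with $\varphi(\tau_2) = P_2$.

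\textit{Stage 2 (claims (1), (2), (3)):} The key engine is Proposition~\ref{curvconvhull}. For (2), apply Proposition~\ref{curvconvhull} to the closed curve $\varphi([a, \tau_1]) \cup [P_1, A]$. Since $[A, P_1]$ lies on a supporting line of $\widetilde K$, the set $U := \co(\varphi([a, \tau_1]))$ sits in one half-plane bounded by that line, so $\bd(U) = [A, P_1] \cup \gamma^*$ for a complementary arc $\gamma^*$, and Proposition~\ref{curvconvhull} yields $\length(\varphi([a, \tau_1])) \geq \length(\gamma^*)$. Replacing $\varphi([a, \tau_1])$ by $\gamma^*$ in $\gamma$ produces a candidate $\gamma'$ of length at most $\length(\gamma)$; the condition $K \subset \co(\gamma')$ holds because any non-isolated extreme point of $\widetilde K$ lying in $\varphi([a, \tau_1])$ is in $K$ by Corollary~\ref{cor.minc.1} and remains on $\gamma^* \subset \bd(U)$, so no relevant extreme point of $\widetilde K$ is lost. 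Minimality forces equality and closed-convexity of the loop, so $\varphi([a, \tau_1]) = \gamma^*$; a further appeal to the local picture near $A$ (only $S_1 = [A, P_1]$ and $S_2 = [A, P_2]$ are present in $\bd(\widetilde K)$) forces $\gamma^* = [A, P_1]$, giving (2). Claim (3) is obtained by applying the same shortening argument to $\varphi([a, \tau_2]) \cup [P_2, A]$, and claim (1) follows from (2), (3), and the local structure.

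\textit{Stage 3 (claims (4), (5)):} For (4), I would argue by contradiction: if $[A, P_1] \cap K = \emptyset$, compactness yields $\delta > 0$ with $\overline{B(A, \delta)} \cap K = \emptyset$; by (2) the initial piece of $\gamma$ is the segment $[A, P_1]$, so truncating a length $< \delta$ from the start produces a strictly shorter curve whose convex hull still contains $K$, contradicting minimality. Claim (5)---the identity $\angle P_1 A P_2 = \pi/2$---is the main obstacle, and I would address it by contradiction following the template of the proof of Lemma~\ref{l.endp}. If $\theta := \angle P_1 A P_2 < \pi/2$, the outer normal cone at $A$ contains two perpendicular directions, so one can choose perpendicular supporting lines through $A$ meeting at a point $C \notin \widetilde K$, together with a perpendicular supporting line touching $\widetilde K$ at some point $D$; Lemma~\ref{l.add.point} combined with Proposition~\ref{curvconvhull} applied to the closed convex curve from (3) then produces a strictly shorter valid curve, a contradiction. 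The case $\theta > \pi/2$ would be handled by a dual corner-cutting construction relying on (4) to control how much of the corner at $A$ may be trimmed. The delicate point throughout is verifying that modifications near $A$ preserve coverage of $K$, which is precisely where claim (4) is used.
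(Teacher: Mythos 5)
Your Stage 1 and the loop-plus-minimality mechanism of Stage 2 match the paper, but three steps do not hold as written, and two of them are genuine gaps. The most serious is claim (4): truncating an initial piece of $\gamma$ deletes the point $A$ from the curve, and the convex hull of the truncated curve $\varphi([a+\varepsilon,b])$ loses the whole corner region at $A$, namely the triangle bounded by $[A,\varphi(a+\varepsilon)]$, $[A,P_2]$ and the chord $[\varphi(a+\varepsilon),P_2]$. That region contains points arbitrarily close to the segment $[A,P_2]$ and far from $A$, and the hypothesis $[A,P_1]\cap K=\emptyset$ (equivalently $K\cap B(A,\delta)=\emptyset$) says nothing about such points, so ``the convex hull still contains $K$'' does not follow; in typical configurations $K$ does approach $[A,P_2]$ away from $A$. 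The paper avoids exactly this by keeping $A$ on the modified curve: it replaces $\varphi([a,\tau_1+\varepsilon])$ by the chord $[A,\varphi(\tau_1+\varepsilon)]$, chooses $\varepsilon$ so that $\co\bigl(\varphi([a,\tau_1+\varepsilon])\bigr)$ is separated from $K$ by a line, and gets strict shortening because $\varphi([a,\tau_1+\varepsilon])$ has a corner at the extreme point $\varphi(\tau_1)$. Claim (5) is likewise not proved. Two perpendicular support lines through $A$ meet at $A\in\widetilde{K}$, not at an outside point $C$, so your acute-angle construction does not parse; moreover the template of Lemma~\ref{l.endp} is unavailable here, since that argument rests on $\length(\gamma)\geq\per(\widetilde{K})$, which Proposition~\ref{curvconvhull} gives only for closed curves, and you never exhibit the shorter admissible curve that Lemma~\ref{l.add.point} is supposed to yield; the obtuse case is only gestured at. The paper's proof of (5) is an explicit local surgery: take $A'=\varphi(\tau')\in[A,P_1]$ with $d(A,A')$ smaller than the distance from $A$ to $K$, drop the perpendicular from $A'$ onto the line $AP_2$ with foot $A''$, and replace the initial segment $[A,A']$ by $[A',A'']$; the discarded right triangle $AA'A''$ lies in the ball of radius $d(A,A')$ about $A$ and so misses $K$, and the leg-versus-hypotenuse comparison makes the new curve strictly shorter unless $\angle P_1AP_2=\pi/2$.

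There is also a repairable misstep in your derivation of (2). From the loop $\varphi([a,\tau_1])\cup[P_1,A]$ you correctly get that $\varphi([a,\tau_1])$ is the arc $\gamma^*$ of $\bd(U)$, $U=\co(\varphi([a,\tau_1]))$, complementary to $[A,P_1]$; but the ``local picture near $A$'' in $\bd(\widetilde{K})$ does not force $\gamma^*=[A,P_1]$, because $U$ is in general a proper subset of $\widetilde{K}$ and $\bd(U)$ near $A$ need not lie in $\bd(\widetilde{K})$: a priori $U$ could be two-dimensional with $[A,P_1]$ as an edge and $\varphi([a,\tau_1])$ going the long way around. The correct route (the paper's) passes through your claim (3): once $\varphi([a,\tau_2])\cup[A,P_2]$ is known to be a closed convex curve, the facts $A,P_1\in\co(\varphi([a,\tau_2]))\subseteq\widetilde{K}$ and $[A,P_1]\subset\bd(\widetilde{K})$ give $[A,P_1]\subset\bd\bigl(\co(\varphi([a,\tau_2]))\bigr)$, and since $\varphi([a,\tau_2])$ traverses that boundary starting from $A$ and avoiding the open segment $(A,P_2)$, its initial arc must be exactly $[A,P_1]$. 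Also, your appeal to Corollary~\ref{cor.minc.1} in the replacement step is unnecessary: what matters is that every extreme point of $\widetilde{K}$ lying on the replaced arc stays on $\gamma^*\subset\bd(U)$, so the new curve still has convex hull $\widetilde{K}\supseteq K$.
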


\begin{proof}
Since the point $A$ is extreme in $\widetilde{K}$ and isolated in $\ext(\widetilde{K})$, then
there are points $A_1, A_2 \in \ext(\widetilde{K})\subset \bd(\widetilde{K})$ such that $[A,A_1], [A,A_2] \subset \bd(\widetilde{K})$ and
$[A,A_1]\cup [A,A_2]$ covers some neighborhood of $A$ in $\bd(\widetilde{K})$
(roughly speaking, $A_1$ and $A_2$ are closest extreme points to $A$ with respect to different directions on $\bd(\widetilde{K})$).
It is clear that $A_1=\varphi(\tau_1)$ and $A_2=\varphi(\tau_2)$ for some $\tau_1, \tau_2 \in (a,b]$. Without loss of generality we may suppose that $0<\tau_1<\tau_2$.

Let us consider the following closed curves:
$$
\gamma_1=\varphi([a,\tau_1])\cup [A,\varphi(\tau_1)], \quad \gamma_2=\varphi([a,\tau_2])\cup [A,\varphi(\tau_2)].
$$

By Proposition \ref{curvconvhull}, we get that $\length(\gamma_1)\geq \per(\co(\gamma_1))$ and $\length(\gamma_2)\geq \per(\co(\gamma_2))$.
Since $[A,A_1], [A,A_2] \subset \bd(\widetilde{K})$, then $[A,A_1]\subset \bd(\co(\gamma_1))$ and $[A,A_2] \subset \bd(\co(\gamma_2))$.
Due to the inclusion $\gamma_i \subset \co(\gamma_i)$, $i=1,2$, we may replace the curve $\gamma$ with
the curve
$$
\gamma_i^-:=\varphi([\tau_i,b]) \cup \bigl(\bd(\co(\gamma_i))\setminus [A,A_i]\bigr)
$$
with the same convex hull $\widetilde{K}$. Since $\gamma$ has minimal length among all curves which convex hull covers $K$, we get
$\length(\gamma_i)=\per (\co(\gamma_i))$ by Proposition \ref{curvconvhull}.
It means that $\gamma_1$ and $\gamma_2$ are closed convex curves by Proposition \ref{curvconvhull} (see Fig.~\ref{Fig2}).

Since $A,A_1 \in \co(\gamma_2)$, then $[A,A_1] \subset \co(\gamma_2)$. On the  other hand, $[A,A_1] \subset \bd (\widetilde{K})$.
Since $\co(\gamma_2)\subset \widetilde{K}$, we get
$[A,A_1]\subset \bd(\co(\gamma_2))$. It implies that $[A,A_1]=\varphi([a,\tau_1])$ and $[A,A_2]\neq \varphi([a,\tau_2])$.
Therefore, assertions 1)--3) are proved.

Let us prove 4). If $[A,\varphi(\tau_1)]\cap K=\emptyset$, then there is $\varepsilon >0$ such that $\co \bigl(\varphi([a,\tau_1+\varepsilon])\bigr)$
and $K$ are situated
in different half-planes with respect to some straight line. Therefore, $K \subset \co(\gamma_3)$, where
$\gamma_3:=\varphi([\tau_1+\varepsilon,b]) \cup [A, \varphi(\tau_1+\varepsilon)]$. On the other hand, $\gamma_3$ is shorter than $\gamma$
(recall that $\varphi(\tau_1)$ is extreme in $\widetilde{K}$, hence $\varphi([a, \tau_1+\varepsilon]) \neq [A, \varphi(\tau_1+\varepsilon)]$).
This contradiction implies $[A,\varphi(\tau_1)]\cap K \neq\emptyset$.

Finally, let us prove 5). If $\angle A_1AA_2 \neq \pi/2$, then we can take $A'\in [A,A_1]$ such that $A'\neq A$ and $d(A,A')$ is less than distance from $A$ to $K$.
Then $A'=\varphi(\tau')$ for some $\tau'\in (a,\tau_1)$. Now, take a point $A'' \in [A,A_2]$ such that $[A', A'']$ is orthogonal to $[A,A_2]$.
If we consider $\gamma_4:=\varphi([\tau',b]) \cup [A',A'']$, then $K \subset \co(\gamma_4)$ and $\length(\gamma_4) <\length(\gamma)$
(since the leg is shorter than the hypotenuse in any right triangle). This contradiction shows that $\angle A_1AA_2 = \pi/2$.

Similar results for the point $B$  we get automatically, reversing the parameterization of the curve $\gamma$.
The proposition is completely proved.
\end{proof}

\begin{center}
\begin{figure}[t]
\centering\scalebox{1}[1]{\includegraphics[angle=0,totalheight=1.7in]
{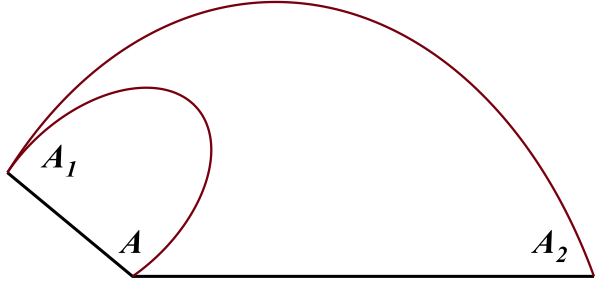}}
\caption{Illustration to the proof of Proposition \ref{prop.minc.2}: the curves $\gamma_1$ and $\gamma_2$.}
\label{Fig2}
\end{figure}
\end{center}

\begin{prop}\label{prop.minc.3}
In the above notations, let $\eta_1$ be the smallest number in $T$
and let $\eta_2$ be the  largest number in $T$, where $T=\{t\in [a,b]\,|\, \varphi(t) \in K\}$.
Then the following inequality holds:
$$
\length(\gamma)+d\bigl(\varphi(\eta_1),\varphi(\eta_2)\bigr) \geq \per(\widetilde{K}) \geq \per(K).
$$
\end{prop}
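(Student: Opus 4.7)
The inequality $\per(\widetilde K)\ge\per(K)$ is immediate from $K\subset\widetilde K$ and Proposition \ref{monotper}. For the first inequality the plan is to split $\gamma$ at the parameters $\eta_1,\eta_2$, bound the middle piece via Proposition \ref{curvconvhull}, and compare the perimeter of its convex hull to $\per(\widetilde K)$ by two applications of Lemma \ref{l.add.point}.

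Decompose $\gamma=\gamma_1\cup\gamma_2\cup\gamma_3$ with $\gamma_1=\varphi([a,\eta_1])$, $\gamma_2=\varphi([\eta_1,\eta_2])$, $\gamma_3=\varphi([\eta_2,b])$. We first show $\gamma_1$ is the straight segment $[A,\varphi(\eta_1)]$: this is trivial when $A\in K$ (since then $\eta_1=a$); when $A\notin K$, the contrapositive of Corollary \ref{cor.minc.1} forces $A$ to be isolated in $\ext(\widetilde K)$, and Proposition \ref{prop.minc.2} then gives $\varphi([a,\tau_1])=[A,\varphi(\tau_1)]$, which contains $\varphi(\eta_1)$ since $K\cap[A,\varphi(\tau_1)]\ne\emptyset$. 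Symmetrically $\gamma_3=[\varphi(\eta_2),B]$, so $\length(\gamma)=d(A,\varphi(\eta_1))+\length(\gamma_2)+d(\varphi(\eta_2),B)$. Applying Proposition \ref{curvconvhull} to the closed curve $\gamma_2\cup[\varphi(\eta_2),\varphi(\eta_1)]$, whose convex hull equals $\co(\gamma_2)$, yields $\length(\gamma_2)+d(\varphi(\eta_1),\varphi(\eta_2))\ge\per(\co(\gamma_2))$. Since $\widetilde K=\co(\gamma_2\cup\{A,B\})$, the target reduces to showing $\per(\widetilde K)\le\per(\co(\gamma_2))+d(A,\varphi(\eta_1))+d(\varphi(\eta_2),B)$, which we obtain by adding first $A$, then $B$, to $\co(\gamma_2)$ via Lemma \ref{l.add.point}. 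In the nontrivial case $A\notin K$ the perpendicular support lines required by the lemma are the two edges $[A,\varphi(\tau_1)]$ and $[A,\varphi(\tau_2)]$ of $\widetilde K$ at $A$ furnished by Proposition \ref{prop.minc.2}, with tangent points $\varphi(\eta_1)$ and $\varphi(\tau_2)$, the latter lying in $\gamma_2$ provided $\tau_2\le\eta_2$; a symmetric analysis handles $B$.

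The main obstacle is the case $\tau_2>\eta_2$. Since $\gamma_3$ is a segment and $\varphi(\tau_2)$ must be an extreme point of $\widetilde K$ on that segment, one concludes $\varphi(\tau_2)=B$ (so $\tau_2=b$) and $[A,B]$ is an edge of $\widetilde K$. Minimality of $\gamma$ combined with Proposition \ref{curvconvhull} then forces $\gamma\cup[B,A]=\bd(\widetilde K)$, making $\gamma$ a convex arc with $\length(\gamma)+d(A,B)=\per(\widetilde K)$. In this configuration $\varphi(\eta_1)$ and $\varphi(\eta_2)$ lie on edges perpendicular to $[A,B]$ at its two endpoints (the symmetric application of Proposition \ref{prop.minc.2} at $B$ gives analogous parameters $\sigma_1,\sigma_2$ with $\varphi(\sigma_2)=A$), so placing coordinates so that $A=(0,0)$ and $B=(d(A,B),0)$ shows $d(\varphi(\eta_1),\varphi(\eta_2))\ge d(A,B)$ directly, and hence $\length(\gamma)+d(\varphi(\eta_1),\varphi(\eta_2))\ge\per(\widetilde K)$ in this case as well.
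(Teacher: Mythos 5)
Your proposal is correct and follows essentially the same route as the paper's proof: straighten the two end pieces of $\gamma$ via Proposition \ref{prop.minc.2}, bound the middle arc plus its chord by Proposition \ref{curvconvhull}, reattach $A$ and $B$ with Lemma \ref{l.add.point} using the perpendicular edges at $A$ and $B$, and treat the case where $[A,B]$ is an edge of $\widetilde{K}$ (the paper's Case 4, your case $\tau_2>\eta_2$) by the same right-angle argument giving $d\bigl(\varphi(\eta_1),\varphi(\eta_2)\bigr)\ge d(A,B)$. The only difference is organizational: the paper splits into four explicit cases according to whether $A,B\in K$ and whether $\theta_1=b$, while you fold the first three into one sequential application of Lemma \ref{l.add.point}.
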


\begin{proof}
Since $K \subset \widetilde{K}$, then the inequality $\per(\widetilde{K}) \geq \per(K)$ follows directly from Proposition \ref{monotper}.
Therefore, it suffices to prove the inequality
\begin{equation}\label{eq.0}
\length(\gamma)+d\bigl(\varphi(\eta_1),\varphi(\eta_2)\bigr) \geq \per(\widetilde{K}).
\end{equation}

We have $\varphi([a,\eta_1])=[A,\varphi(\eta_1)]\subset \bd(\widetilde{K})$ and
$\varphi([\eta_2, b])=[\varphi(\eta_2), B]\subset \bd(\widetilde{K})$ by Proposition \ref{prop.minc.2}.
Proposition \ref{prop.minc.2} also implies that there is $\theta_1 \in (a,b]$  such that
$[A,\varphi(\eta_1)]\cup [A, \varphi(\theta_1)]$ covers a neighborhood of $A$ in $\bd(\widetilde{K})$ if $A \not \in K$ and
there is $\theta_2 \in [a,b)$ such that
$[B,\varphi(\eta_2)]\cup [B, \varphi(\theta_2)]$ covers a neighborhood of $B$ in $\bd(\widetilde{K})$ if $B \not \in K$
(note that $\theta_1=b$ if and only if $\theta_2 =a$).

Let us consider $\widehat{\gamma}=\varphi([\eta_1,\eta_2])$ and $\widehat{K} =\co(\widehat{\gamma})$.
Note that $\widehat{K}\subset \widetilde{K}$ and $\widehat{K}$ contains all extreme points of $\widetilde{K}$
with the possible exception of points $A$ and $B$ (the latter is possible only if $A$ or $B$ is not in $K$). Therefore,
$\widetilde{K}=\co \bigl(\widehat{K} \cup \{A,B\} \bigr)$.

Since $\widehat{\gamma}\cup [\varphi(\eta_1),\varphi(\eta_2)]$ is a closed curve, Proposition \ref{curvconvhull} implies the inequality
\begin{equation}\label{eq.1}
\length(\widehat{\gamma})+d\bigl(\varphi(\eta_1),\varphi(\eta_2)\bigr)\geq \per(\widehat{K}).
\end{equation}

Let us consider the following {\bf four cases}:
1) $A,B \in K$, 2) exactly one of the points $A$ and $B$ is in $K$, 3) $A,B \not \in K$ and $\theta_1<b$, 4) $A,B \not \in K$ and $\theta_1=b$.

In {\bf Case 1)} we have $\gamma= \widehat{\gamma}$ and $\widehat{K}=\widetilde{K}$, hence \eqref{eq.1} implies
$\length(\gamma)+d\bigl(\varphi(\eta_1),\varphi(\eta_2)\bigr)\geq \per(\widetilde{K})$ and we got what we need.

Let us consider {\bf Case 2)}. Without loss of generality we may assume that $B\in K$ (hence, $\eta_2=b$ and $\varphi(\eta_2)=B$) and $A\not\in K$.
Hence, $\widetilde{K}=\co \bigl(\widehat{K} \cup \{A\} \bigr)$.
Let us consider the triangle $A_1AA_2$, where $A_1=\varphi(\eta_1)$ and $A_2=\varphi(\theta_1)$.
By Proposition \ref{prop.minc.2} we have $\angle A_1AA_2=\pi/2$.
Since  $A_1, A_2, \in \bd (\widetilde{K})\cap \widehat{\gamma}$,
we get that $A_1, A_2 \in \bd (\widehat{K})$.
Then \eqref{eq.1}
and Lemma \ref{l.add.point} imply
\begin{eqnarray*}
\length(\gamma)+d\bigl(\varphi(\eta_1),\varphi(\eta_2)\bigr)= d(A,A_1)+\length(\widehat{\gamma})+d\bigl(\varphi(\eta_1),\varphi(\eta_2)\bigr)\\
\geq d(A,A_1)+\per(\widehat{K})> \per(\widetilde{K}),
\end{eqnarray*}
that proves \eqref{eq.0}.

To deal with {\bf Case 3)},
let us consider the triangles $A_1AA_2$ and $B_1BB_2$, where $A_1=\varphi(\eta_1)$, $A_2=\varphi(\theta_1)$, $B_1=\varphi(\eta_2)$, and $B_2=\varphi(\theta_2)$.
By Proposition \ref{prop.minc.2} we have $\angle A_1AA_2=\angle B_1BB_2=\pi/2$.
Note that $\theta_1 <\eta_2$ and $\eta_1 < \theta_2$. Since  $A_1, A_2, B_1, B_2 \in \bd (\widetilde{K})\cap \widehat{\gamma}$,
we get that $A_1, A_2, B_1, B_2 \in \bd (\widehat{K})$.
Then \eqref{eq.1}
and Lemma \ref{l.add.point} imply
\begin{eqnarray*}
\length(\gamma)+d\bigl(\varphi(\eta_1),\varphi(\eta_2)\bigr)= d(A,A_1)+d(B,B_1)+\length(\widehat{\gamma})+d\bigl(\varphi(\eta_1),\varphi(\eta_2)\bigr)\\
\geq d(A,A_1) +d(B,B_1)+\per(\widehat{K})>
d(A,A_1) +\per\bigl(\co(\widehat{K}\cup \{B\})\bigr)\\
> \per\bigl(\co\bigl(\co(\widehat{K}\cup \{B\})\cup \{A\}\bigl)\bigr) = \per\bigl(\co(\widehat{K}\cup \{A,B\})\bigr)=\per(\widetilde{K}),
\end{eqnarray*}
that proves \eqref{eq.0}.

Finally, let us consider {\bf Case 4)}. In this case we have $[A,B] \subset \bd(\widetilde{K})$, $A_2=B$, and $A=B_2$.
Let as consider the quadrangle $AA_1B_1B$, where $A_1=\varphi(\eta_1)$ and $B_1=\varphi(\eta_2)$.
By Proposition \ref{prop.minc.2} we have $\angle A_1AB=\angle B_1BA=\pi/2$.
Since  $A_1, B_1 \in \bd (\widetilde{K})\cap \widehat{\gamma}$,
we get that $A_1, B_1 \in \bd (\widehat{K})$.

We denote  by $\gamma_3$ a part of $\bd(\widehat{K})$ between $A_1$ and $A_2$ such that $\widetilde{K} \subset \co(\gamma_3\cup \{A,B\})$ (see Fig.~\ref{Fig3}).
It is clear that
$\bd(\widetilde{K})= \gamma_3 \cup[A,A_1]\cup[B,B_1]\cup[A,B]$.
Note that $\per(\widehat{K})-\length(\gamma_3)$ is the length of the curve $\bigl(\bd(\widehat{K})\setminus \gamma_3\bigr)\cup\{A_1,B_1\}$,
connecting the points $A_1$ and $B_1$.
Hence, $\per(\widehat{K})-\length(\gamma_3)\geq d(A_1,B_1)\geq d(A,B)$ and we get
\begin{eqnarray*}
\length(\gamma)+d\bigl(\varphi(\eta_1),\varphi(\eta_2)\bigr)= d(A,A_1)+d(B,B_1)+\length(\widehat{\gamma})+d\bigl(\varphi(\eta_1),\varphi(\eta_2)\bigr)\\
\geq \per(\widehat{K})+d(A,A_1)+d(B,B_1)\\
\geq \length(\gamma_3)+d(A,B)+d(A,A_1)+d(B,B_1)=\per(\widetilde{K}).
\end{eqnarray*}
Hence, we have proved \eqref{eq.0} for all possible cases.
The proposition is proved.
\end{proof}

\begin{center}
\begin{figure}[t]
\centering\scalebox{1}[1]{\includegraphics[angle=0,totalheight=1.9in]
{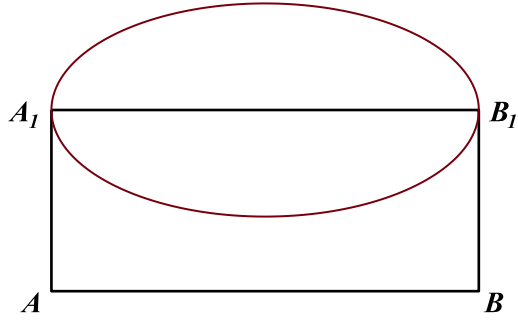}}
\caption{Illustration to Case~4) in the proof of Proposition \ref{prop.minc.3}.}
\label{Fig3}
\end{figure}
\end{center}

\begin{remark}\label{rem.eq.1}
We see from the above proof that  the equality
$$
\length(\gamma)+d\bigl(\varphi(\eta_1),\varphi(\eta_2)\bigr) = \per(\widetilde{K})
$$
is fulfilled if and only if  $\varphi([a,b])\cup [A,B]$ is a convex curve (that coincides with $\bd(\widetilde{K})$)
and the quadrangle $AA_1B_1B$, where $A_1=\varphi(\eta_1)$ and $A_2=\varphi(\theta_1)$, is a rectangle (in particular, $A_1=A$ and $B_1=B$).
Consequently, since $\per(\widetilde{K})=\per(K)$ implies $\widetilde{K}=K$, the equality
$$
\length(\gamma)+d\bigl(\varphi(\eta_1),\varphi(\eta_2)\bigr) = \per(K)
$$
is fulfilled if and only if $\varphi([a,b])\cup [A,B]=\bd(K)$.
\end{remark}

Since $\diam(K)\geq d\bigl(\varphi(\eta_1),\varphi(\eta_2)\bigr)$, then Proposition \ref{prop.minc.3} and Remark \ref{rem.eq.1} imply the following

\begin{corollary}\label{prop.minc.4}
If a curve $\gamma$ has shortest length among all curves whose convex hulls cover a given compact convex figure $K$, then the following inequality holds:
$$
\length(\gamma)+\diam(K) \geq \per(K).
$$
Moreover, the equality in this inequality is fulfilled if and only if $\gamma$ is convex,
$\bd(K)=\gamma \cup [A,B]$,  and $\diam(K)=d(A,B)$, where $A$ and $B$ are the endpoints of the curve $\gamma$.
\end{corollary}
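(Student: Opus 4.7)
The plan is to deduce the corollary directly from Proposition~\ref{prop.minc.3}, the monotonicity of perimeter (Proposition~\ref{monotper}), and the equality-case analysis recorded in Remark~\ref{rem.eq.1}. The inequality itself is almost immediate: by construction, $\eta_1,\eta_2 \in T$ means $\varphi(\eta_1),\varphi(\eta_2) \in K$, so $d(\varphi(\eta_1),\varphi(\eta_2)) \leq \diam(K)$, and chaining this with the conclusion of Proposition~\ref{prop.minc.3} yields
$$
\length(\gamma)+\diam(K) \geq \length(\gamma)+d\bigl(\varphi(\eta_1),\varphi(\eta_2)\bigr) \geq \per(\widetilde{K}) \geq \per(K).
$$

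For the equality case, I would argue that if $\length(\gamma)+\diam(K) = \per(K)$, then all three inequalities above must be equalities simultaneously, giving \textbf{(a)} $d(\varphi(\eta_1),\varphi(\eta_2)) = \diam(K)$, \textbf{(b)} $\per(\widetilde{K})=\per(K)$, and \textbf{(c)} $\length(\gamma)+d(\varphi(\eta_1),\varphi(\eta_2))=\per(\widetilde{K})$. From $K \subset \widetilde{K}$, Proposition~\ref{monotper} combined with (b) forces $K=\widetilde{K}$. Remark~\ref{rem.eq.1} then converts (c) into the geometric statement $\varphi([a,b]) \cup [A,B] = \bd(K)$, which in particular means $\gamma$ is a convex curve in the sense of the paper (a closed connected subset of $\bd(\co(\gamma))=\bd(K)$) and, by the rectangle identification inside that remark, $\varphi(\eta_1)=A$ and $\varphi(\eta_2)=B$. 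Substituting these into (a) produces $d(A,B)=\diam(K)$, which is exactly the required description.

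The converse direction is dispatched in one line by Example~\ref{ex1}: when $\bd(K)$ is the union of $\gamma$ and a chord $[A,B]$ with $d(A,B)=\diam(K)$, we have $\length(\gamma)=\per(K)-d(A,B)=\per(K)-\diam(K)$, so equality holds. I do not anticipate any serious obstacle, since Proposition~\ref{prop.minc.3} and Remark~\ref{rem.eq.1} already do the heavy lifting. The only point that needs a little care is the bookkeeping in the equality case: one must verify that the identifications $\varphi(\eta_1)=A$ and $\varphi(\eta_2)=B$ coming out of Remark~\ref{rem.eq.1} are precisely what turn (a) into the condition $d(A,B)=\diam(K)$, with no residual case distinction between the four cases considered in the proof of Proposition~\ref{prop.minc.3}.
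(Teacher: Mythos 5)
Your proposal is correct and follows essentially the same route as the paper, which deduces the corollary in one line from Proposition~\ref{prop.minc.3}, the bound $\diam(K)\geq d\bigl(\varphi(\eta_1),\varphi(\eta_2)\bigr)$, the monotonicity of perimeter, and the equality analysis of Remark~\ref{rem.eq.1} (with Example~\ref{ex1} giving the converse). Your write-up merely makes explicit the bookkeeping (a)--(c) that the paper leaves implicit.
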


\section{Proof of Theorem \ref{the1}}\label{sect.4}

Let us fix a convex figure $K\subset \mathbb{R}^2$. By Proposition \ref{pr.part1},
there is a curve $\gamma_0$
of minimal length among all curves $\gamma$ satisfying the condition
$K \subset \co(\gamma)$.
By Corollary \ref{prop.minc.4}, we get
$$
\length(\gamma)+\diam(K)\geq \length(\gamma_0)+\diam(K)\geq  \per(K)
$$
for any curve $\gamma$ such that $K\subset \co(\gamma)$,
that proves \eqref{eq.main}. We have the equality in \eqref{eq.main} if and only if $\length(\gamma)=\length(\gamma_0)$ (hence, we may assume that
$\gamma=\gamma_0$ without loss of generality),
$\gamma$ is convex,
$\gamma \cup [A,B]=\bd(K)$,  and $\diam(K)=d(A,B)$, where $A$ and $B$ are the endpoints of the curve $\gamma$.
Therefore, we obtain just convex figures $K$ and corresponding curves $\gamma$ exactly as in Example \ref{ex1}.
\medskip

{\bf Acknowledgements.} The authors would sincerely thank Arseniy Akopyan and Vladislav Vysotsky for reading this paper and helpful discussions.

\vspace{10mm}

\vspace{10mm}

\begin{thebibliography}{99}

\bibitem{AkVys2017}
A.~Akopyan, V.~Vysotsky,
{\sl On the lengths of curves passing through boundary points of a planar convex shape,}
Am. Math. Mon., 124(7) (2017), 588--596, {\bf Zbl.}1391.52003,  {\bf MR}3681589.


\bibitem{BoFe1987}
T.~Bonnesen, W.~Fenchel,
{\sl Theory of convex bodies}, BCS Associates, Moscow, ID, 1987. Translated
from the German and edited by L. Boron, C. Christenson and B. Smith., {\bf Zbl.}0628.52001, {\bf MR}0920366.

\bibitem{BBI2001}
D. Burago, Yu. Burago, and S. Ivanov.
{\sl A course in metric geometry}, Graduate Studies in Mathematics, 33.
American Mathematical Society, Providence, RI, 2001, {\bf Zbl.}0981.51016,  {\bf MR}1835418.

\bibitem{FMP1984}
V. Faber, J. Mycielski, P. Pedersen.
{\sl On the shortest curve which meets all the lines which meet a circle,}
Ann. Polon. Math., 44 (1984), 249--266, {\bf Zbl.}0561.52018,  {\bf MR}0817799.

\end{thebibliography}
\end{document}